\documentclass[11pt]{amsart}

\usepackage[margin=1.15in]{geometry}
\usepackage{amsmath,amssymb,mathtools}
\usepackage{enumitem}
\usepackage{graphicx}
\usepackage{placeins}
\usepackage[colorlinks=true,citecolor=blue,linkcolor=blue,urlcolor=blue]{hyperref}
\usepackage[nameinlink,capitalise]{cleveref}

\graphicspath{{figures/}}

\newtheorem{theorem}{Theorem}[section]
\newtheorem{proposition}[theorem]{Proposition}
\newtheorem{lemma}[theorem]{Lemma}
\newtheorem{corollary}[theorem]{Corollary}

\theoremstyle{definition}
\newtheorem{definition}[theorem]{Definition}
\theoremstyle{remark}
\newtheorem{remark}[theorem]{Remark}

\newcommand{\R}{\mathbb R}
\newcommand{\HH}{\mathcal H}
\newcommand{\supp}{\operatorname{spt}}
\newcommand{\Tr}{\operatorname{Tr}}
\newcommand{\Phiop}{\mathcal Q_{\Phi}}
\newcommand{\Mphi}{\mathbf M_{\Phi}}
\newcommand{\llbracketset}[1]{\mathopen{[\![}#1\mathclose{]\!]}}

\title{A Halfspace Theorem for Global Anisotropic Perimeter Minimizers}
\author{Yang Yang}
\address{School of Mathematics, Hunan University}
\email{\tt yyang1@hnu.edu.cn}
\date{}

\begin{document}

\begin{abstract}
We prove an arbitrary-dimensional halfspace theorem for global minimizers of
a smooth uniformly elliptic anisotropic perimeter: if the nonempty boundary
of a minimizing set is contained in a halfspace, then the set itself is a
halfspace.  No evenness of the integrand or regularity of the minimizing
boundary is assumed.  The proof combines wall contact with a plane-peeling
argument for the complementary phase defect and a simultaneous second
blow-down.  We also give a direct, self-contained exterior-barrier proof of
the two-dimensional stationary anisotropic statement.  Its rigidity
conclusion is already covered by Bergner's earlier halfspace theorem; the
proof is retained because it works directly with the anisotropic
Euler--Lagrange operator.
\end{abstract}

\maketitle

\section{Introduction}

Halfspace theorems express a global rigidity principle: a minimal object that
remains on one side of a hyperplane should itself be flat.  For the Euclidean
area functional, this principle has two classical and complementary forms.
In hypersurface dimension two, the Hoffman--Meeks theorem says that a
connected proper minimal surface without boundary in $\R^3$ which is contained
in a halfspace must be a plane \cite{HoffmanMeeks1990}.  No stability or
minimizing assumption is required, and the proof is a maximum principle at
infinity based on catenoidal barriers.  Bergner subsequently proved a more
general halfspace theorem for proper negatively curved immersions satisfying
a uniform relation between their principal curvatures
\cite{Bergner2010}.\footnote{This reference was inadvertently omitted from
the first arXiv version.}  His framework includes critical points of weighted
area functionals and, through the curvature relation verified below, contains
the two-dimensional stationary anisotropic conclusion considered here.

At the minimizing level, Bombieri--Giusti established the corresponding
one-sided rigidity for oriented boundaries of least area in arbitrary
dimension \cite{BombieriGiusti1972}.  Their argument instead uses Harnack
theory on area-minimizing hypersurfaces and the harmonicity of Euclidean
coordinate functions.  Thus the classical theory naturally separates into a
stationary theory for two-dimensional surfaces and a theorem in every
dimension under the stronger assumption of global minimization.

An anisotropic surface energy assigns a direction-dependent cost to an
interface.  For a smooth oriented hypersurface $\Sigma^n\subset\R^{n+1}$ it
has the form
\[
 \mathcal A_\Phi(\Sigma)=\int_\Sigma\Phi(\nu_\Sigma)\,d\mathcal H^n.
\]
Here positive one-homogeneity of $\Phi$ makes the energy parametric, while
convexity and tangential uniform ellipticity provide the variational
ellipticity.  The choice $\Phi(z)=|z|$ recovers Euclidean area.  Geometrically,
$\Phi$ is the support function of its Wulff shape, the equilibrium shape in
the associated anisotropic isoperimetric problem.  Such energies arise as
continuum models for crystals, liquid drops, and interfaces with
direction-dependent surface tension; they also occur naturally in anisotropic
capillarity problems
\cite{FigalliMaggiPratelli2010,FigalliMaggi2011,DePhilippisMaggi2015}.
If $\Phi$ is even, the energy is independent of the choice of orientation.  A
non-even integrand instead distinguishes the two phases separated by the
interface, which is why oriented boundaries of sets are the natural objects
in the second part of this paper.

The first variation of $\mathcal A_\Phi$ gives the anisotropic mean-curvature
equation, a uniformly elliptic equation along the tangent space.  For graphs
it becomes a quasilinear divergence-form equation, while for general
interfaces its weak formulations belong to the theory of parametric elliptic
integrands, anisotropic varifolds, and finite-perimeter minimizers
\cite{SchoenSimonAlmgren1977,DePhilippisDeRosaGhiraldin2018,DeRosaTione2022}.
The theory is substantially less rigid than its Euclidean counterpart.  In
particular, the standard Euclidean monotonicity formula is not available for
a general anisotropy \cite{Allard1974}.  Consequently, arguments based on
blow-ups or blow-downs cannot simply import the usual area-ratio monotonicity;
one instead needs density estimates, compactness, barriers, and quantitative
flatness principles adapted to the integrand.

The Bernstein problem gives another indication of this difference.  For the
Euclidean area functional, entire minimal graphs are affine through
hypersurface dimension seven, with counterexamples from dimension eight onward
\cite{Simons1968,BombieriDeGiorgiGiusti1969}.  For general smooth uniformly
elliptic anisotropies, unconditional rigidity holds only through hypersurface
dimension three \cite{Jenkins1961,Simon1977} and fails in general in higher
dimensions \cite{Mooney2022,MooneyYang2024}, while the related theory of
minimizing cones exhibits further flexibility \cite{MooneyYang2021}.
Under additional quantitative assumptions on the integrand and the growth of
the graph, all-dimensional rigidity may nevertheless be recovered
\cite{DuYang2024}.

Boundary geometry supplies a different source of rigidity.  Edelen--Wang
proved that a Euclidean minimal graph over a proper convex domain is affine
when its boundary data are affine \cite{EdelenWang2022}.  In the anisotropic
setting, Du--Mooney--Yang--Zhu obtained the corresponding conclusion in every
dimension, in particular for graphs over a halfspace
\cite{DuMooneyYangZhu2025}.  More recently, Wang--Wei--Xia--Zhang proved that
an anisotropic minimal graph over a halfspace with anisotropic free boundary
is affine when its negative part has at most linear growth
\cite{WangWeiXiaZhang2026}.  These results show that one-sided or convex
geometry can restore rigidity beyond the dimensions in which the unrestricted
anisotropic Bernstein theorem is valid.

The purpose of the present paper is to study halfspace rigidity for genuinely
nongraphical anisotropic interfaces.  The main new result is the
arbitrary-dimensional theorem for global anisotropic perimeter minimizers.
The existing graph and free-boundary results do not address this setting, and
the Euclidean minimizing proof does not transfer formally: contact with a
supporting plane does not by itself exclude an additional phase defect in the
halfspace.  The treatment of this defect, together with the multiplicity-one
structure of a set boundary, is the central point of the argument.

For completeness, we also retain the two-dimensional stationary statement
and give a direct proof adapted to the anisotropic Euler--Lagrange operator.
Although its rigidity conclusion is already covered by Bergner's theorem, the
explicit global strict exterior barrier provides a useful PDE formulation of
the comparison argument and is logically independent of the minimizing proof.

We now introduce the common setting.  Let $n\ge1$ and let
$\Phi:\R^{n+1}\to[0,\infty)$ be a convex, positively
one-homogeneous integrand, with $\Phi(0)=0$; thus
$\Phi(tz)=t\Phi(z)$ for $t\ge0$ and $z\in\R^{n+1}$.  We assume throughout
that $\Phi\in C^3(\R^{n+1}\setminus\{0\})$ and that, for some
$\lambda\in(0,1]$,
\begin{equation}\label{old-eq:regularity-bounds}
 \lambda\le\Phi(\nu)\le\lambda^{-1},
 \qquad
 \|\Phi\|_{C^3(\mathbb S^n)}\le\lambda^{-1}.
\end{equation}
We also assume the tangential ellipticity bounds
\begin{equation}\label{old-eq:ellipticity}
  \lambda |\tau|^2
  \le D^2\Phi(\nu)[\tau,\tau]
  \le \lambda^{-1}|\tau|^2,
  \qquad \nu\in\mathbb S^n,\quad \tau\perp\nu.
\end{equation}
Let $\Sigma\subset\R^{n+1}$ be a smooth oriented $n$-dimensional hypersurface
with unit normal $\nu_\Sigma$.  For $\nu\in\mathbb S^n$, we denote by
\[
 D_T^2\Phi(\nu):=D^2\Phi(\nu)|_{\nu^\perp\times\nu^\perp}
\]
the restriction of the Hessian to $\nu^\perp$.  For $x\in\Sigma$ and
$X,Y\in T_x\Sigma$, our sign convention for the second fundamental form is
\[
 \mathrm{II}_{\Sigma}(X,Y)
 :=-\langle\nabla_X\nu_\Sigma,Y\rangle,
\]
where $\nabla$ is the Euclidean connection.  If
$\tau_1,\ldots,\tau_n$ is an orthonormal basis of $T_x\Sigma$, then
\begin{equation}\label{old-eq:tangential-contraction}
 \operatorname{tr}\bigl(D_T^2\Phi(\nu_\Sigma(x))\,
       \mathrm{II}_{\Sigma}\bigr)
 :=\sum_{i,j=1}^n D^2\Phi(\nu_\Sigma(x))[\tau_i,\tau_j]\,
       \mathrm{II}_{\Sigma}(\tau_i,\tau_j).
\end{equation}
The anisotropic area of $\Sigma$ in an open set $U$ is
\begin{equation}\label{old-eq:smooth-anisotropic-area}
 \mathcal A_\Phi(\Sigma;U)
 :=\int_{\Sigma\cap U}\Phi(\nu_\Sigma)\,d\HH^n.
\end{equation}
For a set $E$ of locally finite perimeter, we write $\partial^*E$ for its
reduced boundary and $\nu_E$ for its measure-theoretic outer unit normal,
with the convention
\begin{equation}\label{old-eq:BV-orientation-convention}
 -D\chi_E=\nu_E|D\chi_E|
 =\nu_E\HH^n\llcorner\partial^*E.
\end{equation}
Here $\mu\llcorner A$ is the restriction of a measure $\mu$ to a Borel set
$A$.  We define
\begin{equation}\label{old-eq:anisotropic-perimeter}
 P_\Phi(E;U)
 :=\int_{U\cap\partial^*E}\Phi(\nu_E)\,d\HH^n
 =\int_U\Phi(\nu_E)\,d|D\chi_E|.
\end{equation}

We use $B_r:=B_r(0)$.  For a possibly vector-valued Radon measure $\mu$, set
$\supp\mu:=\supp|\mu|$.  Equalities and inclusions between finite-perimeter
sets are understood up to $\mathcal L^{n+1}$-null sets unless stated
otherwise.

We first record the stationary statement in hypersurface dimension two.  Its
rigidity conclusion is not claimed as new.  To relate it explicitly to
Bergner's theorem, let $\tau_1,\tau_2$ be principal directions at a nonflat
point, let $\kappa_1,\kappa_2$ be the corresponding principal curvatures, and
set
\[
 a_i:=D^2\Phi(\nu_\Sigma)[\tau_i,\tau_i].
\]
Uniform ellipticity and stationarity give
\[
 \lambda\le a_i\le\lambda^{-1},
 \qquad
 a_1\kappa_1+a_2\kappa_2=0.
\]
Consequently, at a nonflat point the two curvatures have opposite signs;
after interchanging the indices, we may take $\kappa_1>0>\kappa_2$, and
\[
 \lambda^2
 \le -\frac{\kappa_1}{\kappa_2}
 =\frac{a_2}{a_1}
 \le\lambda^{-2}
\]
whenever the second fundamental form is nonzero.  Equivalently,
\[
 \kappa_1^2+\kappa_2^2
 \le (\lambda^2+\lambda^{-2})(-\kappa_1\kappa_2),
\]
and this inequality is trivial at a flat point.  This is the uniform
principal-curvature relation appearing in Bergner's framework.  His theorem
therefore yields the conclusion below, in the more general setting of proper
immersions.  We state the result in the present operator notation because
Sections~\ref{old-sec:barrier} and \ref{old-sec:proper-proof} give a
self-contained alternative proof.

When $\Phi$ is even, the functional is defined without choosing an orientation.
In the statement below, a
surface written as a subset of Euclidean space is understood to be embedded,
and ``proper'' means that $\Sigma\cap C$ is compact in $\Sigma$ for every
compact set $C\subset\R^3$.

\begin{theorem}[Anisotropic halfspace theorem in $\R^3$]
\label{old-thm:proper-halfspace}
Let $\Phi:\R^3\to[0,\infty)$ satisfy
\eqref{old-eq:regularity-bounds}--\eqref{old-eq:ellipticity}, with $n=2$, and
be even.  Let
$\Sigma\subset\R^3$ be a connected smooth oriented proper surface without
boundary satisfying
\begin{equation}\label{old-eq:theorem-stationarity}
 \operatorname{tr}\bigl(D_T^2\Phi(\nu_\Sigma)\,\mathrm{II}_\Sigma\bigr)=0
 \qquad\text{on }\Sigma.
\end{equation}
If $\Sigma$ is contained in a closed halfspace, then $\Sigma$ is an affine
plane.
\end{theorem}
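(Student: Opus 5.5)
The plan follows the Hoffman--Meeks maximum principle at infinity, with the catenoid replaced by an explicit global strict exterior barrier for the anisotropic operator. After a rigid motion, assume $\Sigma\subset\{x_3\ge0\}$ and, after a vertical translation, $\inf_\Sigma x_3=0$. If the infimum is attained at some point $p$, then $\Sigma$ and the plane $\{x_3=0\}$ are tangent at $p$ with $\Sigma$ on one side. Both surfaces solve \eqref{old-eq:theorem-stationarity}, since a plane has $\mathrm{II}=0$. Written as graphs over the common tangent plane, the difference of the two solutions satisfies a linear uniformly elliptic equation obtained from the quasilinear divergence-form graph equation. The strong maximum principle then gives local coincidence. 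Since $\Sigma$ is connected and proper, the set where $\Sigma$ agrees with the plane is open and closed in $\Sigma$, so $\Sigma$ is the plane. Hence the whole difficulty is the case where the infimum is not attained.

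The key construction is a family of rotationally symmetric comparison surfaces $C_{\varepsilon}$, \emph{anisotropic catenoid ends}. Each is a graph $x_3=\varepsilon\, f(|x'|/\varepsilon)$ over the exterior region $\{|x'|\ge\varepsilon\}$, with $f(1)=0$ and vertical tangent along the neck circle. The requirement is that $C_\varepsilon$ be a \emph{strict} subsolution, $\operatorname{tr}(D_T^2\Phi(\nu)\mathrm{II})>0$ with respect to the upward normal, or at least of the sign that forbids one-sided interior tangency with a solution. It should also grow sublinearly, for example $f(r)\sim \log r$, or like $r^{\alpha}$ with $\alpha<1$.

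Because $\Phi$ is not rotationally symmetric, such surfaces cannot be exact solutions. Instead I would use ellipticity directly. For a radial surface the two principal curvatures are the meridian curvature $\kappa_1=f''/(1+f'^2)^{3/2}$ and the parallel curvature $\kappa_2=f'/(r(1+f'^2)^{1/2})$, and these have opposite signs. The ellipticity constants satisfy $a_i\in[\lambda,\lambda^{-1}]$. The off-diagonal Hessian terms must also be handled; they are controlled by the $C^3$ bound, or by choosing the principal frame and absorbing the cross term. It then suffices to make one curvature dominate the other by a factor exceeding $\lambda^{-2}$. Concretely, I would take $f'(r)=(r^{\beta}-1)^{-1/2}$ for a suitable $\beta>2$ depending on $\lambda$, in the style of the Euclidean catenoid $f'=(r^2-1)^{-1/2}$. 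With this choice the meridian term beats $\lambda^{-2}$ times the parallel term everywhere, and $f$ stays bounded or grows slowly.

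The main obstacle, which is this construction, is to verify the sign uniformly, including near the neck where the graph becomes vertical. There I would switch to parametrizing by $r$ as a function of $x_3$.

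With the barrier in hand, the contradiction argument runs as follows. Suppose the infimum is not attained. Pick $p_0\in\Sigma$ with $x_3(p_0)=\delta$ small, and place the barrier family below $\Sigma$. The family consists of sets $C_{\varepsilon}+(q,-t)$ centered on the vertical line through a point $q$. One fixes $q$ so that the vertical axis lies away from $\Sigma$ near the neck, which is possible after slightly translating the plane, as in Hoffman--Meeks, using properness and the positive distance from $\Sigma$ to the compact neck region. Now decrease $\varepsilon\to0$ and shift upward. Because $f$ has sublinear (logarithmic-type) growth and a tilt of height $\delta/2$, the barriers converge to the plane $\{x_3=\delta/2\}$ minus the axis. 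Hence some member of the family must touch $\Sigma$, and properness guarantees a first contact at a finite point. This contact is interior: the neck boundary circle is kept away from $\Sigma$, and $\Sigma$ lies above. Comparing the strict inequality for $C_\varepsilon$ with the equality \eqref{old-eq:theorem-stationarity} at a one-sided tangency contradicts the comparison principle for the graph operator. Evenness of $\Phi$ ensures that the orientation of $\Sigma$ at the contact can be chosen to match the barrier's normal. Therefore the infimum is attained, and the first step shows that $\Sigma$ is a plane.
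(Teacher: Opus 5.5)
There is a genuine gap, and it sits in the barrier, at the step ``properness guarantees a first contact at a finite point.'' For your profile $f'(r)=(r^\beta-1)^{-1/2}$ one has $1+f'^2=r^\beta/(r^\beta-1)$, and the ratio of principal curvatures is exactly $|\kappa_{\mathrm{mer}}|/|\kappa_{\mathrm{par}}|=\beta/2$. So the meridian term does dominate once $\beta/2>\lambda^{-2}$. Two things then go wrong.

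First, as written $f$ is increasing and concave, so for the upward normal the dominating meridian curvature is negative. The graph is then a strict \emph{super}solution, and nothing forbids it from touching $\Sigma$ from below. Only the reflected graph $x_3=-\varepsilon f(|x'|/\varepsilon)$ has the right sign.

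Second, and this is fatal: for $\beta>2$ we have $\int_1^\infty(r^\beta-1)^{-1/2}\,dr<\infty$, so $f$ is \emph{bounded}, contrary to the logarithmic growth you invoke later. With a bounded barrier $w$, the function $z-w\circ\pi$ on $\Sigma$ is bounded below but not coercive. Along the sequence where $z\to\inf_\Sigma z=0$, which must escape to infinity, it tends to a finite value, so the first vertical contact can occur at infinity. Properness cannot exclude this. In fact, the same bounded radial construction (with $\beta/2>(n-1)\lambda^{-2}$) exists for hypersurfaces in $\R^{n+1}$, $n\ge3$. There catenoids lie in slabs and the halfspace theorem is false, so no argument using only such barriers plus properness can succeed. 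The genuinely two-dimensional input is the logarithmic divergence of the barrier. That divergence is what makes the paper's $f_t=z-\varepsilon-b_t\circ\pi$ coercive, so that its negative minimum is attained at an interior point.

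This cannot be repaired inside a Euclidean-radial ansatz. Far out, where the gradient is small, the operator is essentially $A:D^2w$ with $A=D^2\varphi(0)$. A downward radial $w=-g(|x|)$ is then a subsolution only if $-rg''/g'>A[\hat x^\perp,\hat x^\perp]/A[\hat x,\hat x]$ for every direction $\hat x$. Unless $A$ is a multiple of the identity, the right side exceeds $1$ for some $\hat x$, which forces $g'=O(r^{-\gamma})$ with $\gamma>1$, so $g$ is again bounded.

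The missing idea has two parts. First, replace $|x|$ by the elliptic radius $\rho=(x^TA^{-1}x)^{1/2}$, for which $A:D^2\log\rho=0$ exactly. Second, obtain strictness perturbatively rather than from curvature dominance: $b=-a\log\rho+C_*a^2(\rho^{-1}-1)$ satisfies $A:D^2b=C_*a^2\rho^{-3}$. The nonlinear error $\|D^2\varphi(Db)-A\|\,|D^2b|$ is $O(a^2\rho^{-3})$, with a leading constant independent of $C_*$, so it is absorbed by taking $C_*$ large and then $a$ small. This $b$ tends to $-\infty$ and has small gradient, so no vertical neck is needed; the inner boundary $\{\rho=t\}$ is simply kept off $\Sigma$ by the height bound over a fixed disk. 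Moreover $t\,b(\cdot/t)\to0$ in $C^2_{\mathrm{loc}}(\R^2\setminus\{0\})$.

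The rest of your outline is fine and matches the paper: the attained-infimum case via the strong maximum principle, and the use of evenness to fix the orientation at the contact point.
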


\begin{remark}
The boundaryless hypothesis in \cref{old-thm:proper-halfspace} is essential.
Indeed, on the halfplane $\{x>0\}$ the function
\begin{equation}\label{old-eq:shifted-helicoid}
 u(x,y)=c\arctan\frac{y}{x+a},
 \qquad a>0,\quad c\ne0,
\end{equation}
defines a nonaffine minimal graph.  It is a shifted helicoid branch, is smooth
up to $\{x=0\}$, and satisfies $|u|<|c|\pi/2$; hence its graph is contained in
a slab, and therefore in a halfspace.  Its boundary trace
$u(0,y)=c\arctan(y/a)$ is nonlinear.  Including $\{x=0\}$ gives a surface
with noncompact boundary, whereas deleting that boundary destroys properness
in $\R^3$.  Moreover, for axially symmetric surface tensions the usual
helicoid can remain anisotropic minimal \cite{KuhnsPalmer2011}, so this
distinction is not peculiar to the isotropic integrand.
\end{remark}

We next pass from stationary surfaces to globally minimizing boundaries in
arbitrary dimension.  The natural objects here are multiplicity-one
boundaries of Caccioppoli sets.  This hypothesis is necessary for the theorem
in this form:
an area-minimizing integral current may be a sum of several calibrated
parallel hyperplanes, whereas the boundary of a set has unit multiplicity.

\begin{definition}
\label{old-def:perimeter-minimizer}
Let $U\subset\R^{n+1}$ be open.  A set $E\subset\R^{n+1}$ of locally finite
perimeter is a $\Phi$-perimeter minimizer in $U$ if
\begin{equation}\label{old-eq:global-minimizer}
  P_\Phi(E;W)\le P_\Phi(F;W)
\end{equation}
whenever $F$ is a set of locally finite perimeter, $W\subset U$ is open with
$\overline W$ compact in $U$, and $E\triangle F$ is compactly contained in
$W$.  It is a global
$\Phi$-perimeter minimizer when $U=\R^{n+1}$.
\end{definition}

\begin{theorem}[Halfspace theorem for globally minimizing boundaries]
\label{old-thm:minimizing-halfspace}
Let $\Phi$ satisfy
\eqref{old-eq:regularity-bounds}--\eqref{old-eq:ellipticity}, without any evenness
assumption, and let $E\subset\R^{n+1}$ be a global $\Phi$-perimeter minimizer.  If
\begin{equation}\label{old-eq:one-sided-support}
 \varnothing\ne\supp |D\chi_E|
 \subset\{x\cdot e\ge c\}
\end{equation}
for some $e\in\mathbb S^n$ and $c\in\R$, then $E$ agrees almost
everywhere with a halfspace.  In particular, $\supp |D\chi_E|$ is an affine
hyperplane.
\end{theorem}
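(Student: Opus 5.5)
We will argue by sliding the supporting hyperplane to its first contact, splitting the half-plane region into phases, and then peeling away any extra phase via a blow-down compactness argument. Normalize $e=e_{n+1}$ and translate so that $c^*:=\inf\{x_{n+1}:x\in\supp|D\chi_E|\}$ is the optimal constant. The standing tools are the standard ones for $\Phi$-perimeter minimizers with smooth uniformly elliptic $\Phi$:
\begin{itemize}
\item uniform upper and lower density estimates for $|D\chi_E|$ and for the volume of $E$ and its complement;
\item $L^1_{\mathrm{loc}}$ compactness of minimizers, with convergence of supports in Hausdorff distance on compacts;
\item the strong maximum principle between a minimizer and a hyperplane in the sense of Solomon--White / Schoen--Simon (allowed, since hyperplanes are $\Phi$-minimizing for every orientation, evenness not needed);
\item $\varepsilon$-regularity (Almgren--Schoen--Simon) at flat points.
\end{itemize}

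\textbf{Step 1: the region below the support.} Since $\{x_{n+1}<c^*\}$ is connected and disjoint from $\supp|D\chi_E|$, $\chi_E$ is a.e.\ constant there. Replacing $E$ by its complement with the reflected integrand $\tilde\Phi(z)=\Phi(-z)$ (which satisfies the same bounds) if needed, we may assume $E\supset\{x_{n+1}<c^*\}$. Nonemptiness of the support and density estimates show $E$ is not all of $\R^{n+1}$.

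\textbf{Step 2: contact or asymptotic contact.} If $c^*$ is attained, the plane $H=\{x_{n+1}=c^*\}$ touches $\supp|D\chi_E|$ from below. Both $H$ and $\partial E$ are $\Phi$-minimizing near the contact point, with $E$ on the same side. The strong maximum principle then gives that $\partial E$ coincides with $H$ near the point, and by connectedness of $H$ and closedness/openness of the coincidence set, $H\subset\supp|D\chi_E|$.

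If $c^*$ is not attained, pick $x_k\in\supp|D\chi_E|$ with $x_{k,n+1}\to c^*$. Translate horizontally and pass to a limit minimizer using compactness and density estimates (which prevent the support from vanishing). The limit attains the infimum, so by the previous case its boundary contains $H$. Density lower bounds and $\varepsilon$-regularity then make $\partial E$ near $x_k$ a graph converging to $H$. I expect to need this only through the blow-down in Step 4.

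\textbf{Step 3: the phase defect.} Once $H\subset\supp|D\chi_E|$, multiplicity one forces $\partial E$ to be exactly $H$ near $H$: the part of $E$ below is the lower phase and just above is $E^c$. The remaining worry is a "defect" $G:=E\cap\{x_{n+1}>c^*\}$, which is again a $\Phi$-minimizer (locally, in the open upper halfspace). Its boundary lies in $\{x_{n+1}\ge c^*\}$ and is separated from $H$: if it touched $H$, multiplicity would exceed one. So $\supp|D\chi_G|\subset\{x_{n+1}\ge c^*+\delta\}$ is not clear a priori, since it may only approach $H$ at infinity.

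This is the plane-peeling step. Apply Steps 1--2 to $G$ (a global minimizer once we add the empty lower phase, i.e.\ $G$ alone is a global minimizer because $\partial G\cap H=\emptyset$ up to the asymptotic issue). This produces a new lowest plane $H'$ for $\partial G$, and now $E^c$ lies in the slab between $H$ and $H'$ locally. A slab $\{c^*<x_{n+1}<c'\}$ filled by $E^c$ with $E$ on both sides is not a minimizer: its two boundary planes carry costs $\Phi(e)+\Phi(-e)$ per unit area, and deleting a large disk of the slab is cheaper by a volume/perimeter comparison, since the side wall costs $O(R^{n-1})$ while the saving is $\sim R^n$. This gives a contradiction unless $G=\emptyset$, hence $E$ is the halfspace $\{x_{n+1}<c^*\}$.

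\textbf{Step 4: main obstacle.} The obstacle is the nonattained or asymptotic case, where the defect or the contact escapes to infinity. Here I would do a simultaneous second blow-down: rescale $E_r=E/r$ around a sequence of near-contact points. Density estimates give perimeter ratios bounded by $C r^n$, so subsequences converge to a global minimizer $E_\infty$ with support in a halfspace and containing a contact plane. Applying Steps 2--3 to $E_\infty$ shows $E_\infty$ is a halfspace with boundary $H$. Then $\varepsilon$-regularity upgrades this to flatness at all large scales for $E$ itself, making $\partial E$ an entire Lipschitz $\Phi$-minimal graph with one-sided bound. The halfspace graph rigidity (a Harnack / Liouville argument for the linearized uniformly elliptic equation, as in Du--Mooney--Yang--Zhu) then forces it to be affine, and parallel to $H$ by the one-sided bound. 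The delicate point is doing the blow-down for $E$ and the defect simultaneously, so that the peeled plane does not disappear in the limit. I would handle this by tracking the heights of both lowest supports, normalized by the scale.
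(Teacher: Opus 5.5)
Your outline has the right architecture (contact with a supporting plane, a phase defect $G=E\cap\{x_{n+1}>c^*\}$, a second blow-down, then $\varepsilon$-regularity). However, the step that carries the proof, excluding $G$, is not established. Two ingredients are missing.

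\emph{Minimality of the defect.} You assert that $G$ is a global $\Phi$-minimizer ``because $\partial G\cap H=\varnothing$ up to the asymptotic issue.'' Minimality in the open upper halfspace is not enough, because wall contact for $G$ at $H$ requires competitors that cross $H$. What works is an exact splitting. Once $E^c$ has full trace on all of $H$, the defect $G$ has zero trace, $|D\chi_G|(H)=0$, and $P_\Phi(E;W)=P_\Phi(\{x_{n+1}<c^*\};W)+P_\Phi(G;W)$. The full trace comes from the De~Philippis--Maggi wall-trace principle. Solomon--White requires an even integrand, and in any case the conclusion is full trace, not that $\supp|D\chi_E|$ equals $H$ nearby. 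For any competitor $L$ of $G$, monotone truncation of $\chi_{\{x_{n+1}<c^*\}}+\chi_L$ and subadditivity of $\mathcal F_\Phi$ then give $P_\Phi(G;W)\le P_\Phi(L;W)$, with no separation hypothesis.

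\emph{The asymptotic case.} Your slab comparison is sound only when $\supp|D\chi_G|$ stays at positive height above $H$ and attains its lowest height, so that wall contact makes $E$ fill the region just above $H'$ over arbitrarily large disks. It says nothing when the defect approaches $H$ only at infinity. Step~4 does not close that case: applying Steps~2--3 to a blow-down limit $E_\infty$ reproduces the same possibility, so the argument is circular. Moreover, support convergence for $E$ alone cannot detect the defect in such a limit. The defect's near-contact points converge to points of $H$, which already lie in $\supp|D\chi_{E_\infty}|$.

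The missing idea is to let the slab collapse rather than keep it open. Translate a contact point to the origin, so that the plane $P=\{x_{n+1}=0\}$ is dilation invariant. Since $E^c$ and $G$ are both global minimizers (for $\Phi^-$ and $\Phi$ respectively), you can blow both down along one common subsequence. Both total-variation measures then converge, and each sequence separately enjoys support persistence.

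A fixed $q\in\supp|D\chi_G|$ is sent to $q/s_j\to0$. Hence the limit defect $G_\infty$ touches $P$ at the origin and, by wall contact, has full trace on $P\cap B_1$. Its complement in the halfspace, $F_\infty:=\lim_j s_j^{-1}E^c$, therefore has zero trace there. The splitting $|D\chi_{s_j^{-1}E^c}|=\HH^n\llcorner P+|D\chi_{s_j^{-1}G}|$ survives the limit. Restricting $|D\chi_{F_\infty}|=\HH^n\llcorner P+|D\chi_{G_\infty}|$ to $P\cap B_1$ gives $0=2\HH^n\llcorner(P\cap B_1)$.

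This is exactly your ``multiplicity would exceed one'' observation, applied in the limit where contact is guaranteed. So the worry that the peeled plane might disappear is misplaced. What must not disappear is the defect's support, and its own minimality (through density estimates) guarantees that. The collapse of $H'$ onto $H$ is precisely what produces the contradiction.

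Once the defect is excluded, your final step is also heavier than needed. The one-sheet graphical regions over $P\cap B_{\theta_*R_j}$ have normals converging uniformly to $\nu_H$. So each fixed boundary point already has normal exactly $\nu_H$, and no Liouville theorem or Du--Mooney--Yang--Zhu input is required.
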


\begin{remark}
The geometric content of \cref{old-thm:minimizing-halfspace} can be phrased as
follows.  The set $E$ represents one phase, and
$\Sigma_E:=\supp|D\chi_E|$ is the interface separating it from the other
phase.  The theorem asserts that if the entire interface is confined to one
side of a hyperplane, then it cannot bend,
oscillate, retain singularities, or contain additional components:
$\Sigma_E$ is an affine hyperplane and $E$ is one of the two halfspaces.  In
short, global anisotropic perimeter minimality together with one-sided
confinement forces complete flatness.  Notice that no graphical or regularity
hypothesis is imposed on $\Sigma_E$, and $\Phi$ need not be even.
\end{remark}

For completeness, we give a direct proof of
\cref{old-thm:proper-halfspace}.  Following the Hoffman--Meeks barrier
philosophy, we replace the exact catenoid by a strict comparison surface
adapted to the anisotropic graph operator.
Exact anisotropic catenoids and Delaunay-type surfaces are known under
additional symmetry assumptions on the surface tension
\cite{KoisoPalmer2005,KoisoPalmer2008}; these constructions do not supply the
required family for an arbitrary anisotropy and supporting direction.  Near a
supporting plane, the anisotropic graph operator linearizes to a
constant-coefficient elliptic equation in two variables, whose fundamental
solution is an elliptic logarithm.  Correcting this logarithm by a lower-order
inverse-radius term produces a strict subsolution on the complement of a
compact elliptic disk.  Its compact inner boundary and proper negative height,
together with the properness of $\Sigma$, make the first-contact argument
coercive and prevent contact from escaping to infinity.

For \cref{old-thm:minimizing-halfspace}, we use the De Philippis--Maggi density
estimates, compactness theorem, support convergence, and wall-contact maximum
principle \cite{DePhilippisMaggi2015}.  The central step is the following
rigidity statement for a minimizer touching its supporting wall.

\begin{proposition}[Wall rigidity for set minimizers]
\label{old-prop:wall-rigidity}
Let $H=\{x_{n+1}>0\}$ and $P=\partial H$.  Suppose that $F\subset H$ is a
global $\Phi$-perimeter minimizer and
$0\in\supp|D\chi_F|$.  Then $F=H$ almost everywhere.
\end{proposition}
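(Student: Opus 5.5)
We argue by applying the De Philippis--Maggi wall-contact maximum principle to $F$ and the halfspace $H$, and then removing any leftover defect. Since $F\subset H$ and $F$ minimizes $P_\Phi$ globally, the halfspace $H$ is itself a global $\Phi$-perimeter minimizer, being calibrated by the constant field $\nabla\Phi(-e_{n+1})$. We have $F\subset H$, and the boundaries touch at $0\in P$. The strong maximum principle for comparable minimizers (in the form of \cite{DePhilippisMaggi2015}, which needs no evenness and no regularity) then gives a dichotomy: either $F$ and $H$ coincide near $0$, or the contact is impossible. So $\supp|D\chi_F|$ coincides with $P$ in a neighbourhood of $0$, and $F=H$ near $0$ in the correct orientation. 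A connectedness argument follows. Let $S=\{p\in P: F=H \text{ near } p\}$; it is nonempty and open. At a limit point $p\in\overline S\cap P$ we have $p\in\supp|D\chi_F|$ by support convergence, so the same wall-contact argument applies at $p$ and gives $p\in S$. Hence $S$ is also closed, so $S=P$. Consequently $P\subset\supp|D\chi_F|$ and $F$ fills a one-sided slab $\{0<x_{n+1}<\delta(x')\}$ along $P$.

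It remains to exclude a \emph{complementary phase defect}: the set $G=H\setminus F$ may be nonempty and lie away from $P$, with $\supp|D\chi_F|=P\cup\Sigma'$ and $\Sigma'\subset H$. To handle it we peel planes. Let $t_0=\inf\{x_{n+1}: x\in\Sigma'\}\ge0$. If $t_0>0$ and the infimum is attained, the plane $\{x_{n+1}=t_0\}$ touches $\partial G$ from below. Near that point $F$ is locally a minimizer lying below the defect, which is equivalent to $\R^{n+1}\setminus F$ lying in the halfspace $\{x_{n+1}>t_0\}$. The wall argument applied to the complement, with the integrand $\Phi(-\cdot)$, then forces $\Sigma'$ to contain the whole plane $\{x_{n+1}=t_0\}$. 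In that case $F$ contains the slab $\{0<x_{n+1}<t_0\}$ and its boundary contains both planes, with opposite normals. Replacing $F$ inside a large cylinder by $F$ minus that slab, or by filling it, changes the perimeter by roughly $-(\Phi(e_{n+1})+\Phi(-e_{n+1}))R^n$ plus $O(t_0R^{n-1})$ from the side. This contradicts minimality for large $R$, since the two parallel oppositely oriented sheets are not minimizing.

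The main obstacle is the case where the infimum $t_0$ is not attained, or $t_0=0$ with $\Sigma'$ approaching $P$ only at infinity. Here we would do a simultaneous second blow-down. We translate along points $x_k\in\Sigma'$ with $x_{n+1}(x_k)\to t_0$, and also rescale if $|x_k'|\to\infty$. Compactness of minimizers together with the density estimates gives a limit minimizer $F_\infty\subset H$ that still contains $P$ in its boundary and has a defect boundary point at height $t_0$. The density estimates guarantee that the defect does not vanish in the limit. If $t_0>0$, the attained case above applies to $F_\infty$. If $t_0=0$, the defect touches $P$, which contradicts the local structure $F_\infty=H$ near every point of $P$ obtained in the first paragraph. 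The technical care lies in keeping the translations and the rescaling compatible, so that both the wall $P$ and the defect point survive in the limit. This is where the multiplicity-one structure is used: limits of set boundaries are set boundaries, so two sheets cannot collapse into one.
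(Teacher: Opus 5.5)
\textbf{There is a genuine gap.} It starts in your first paragraph and is what your case $t_0=0$ rests on.

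The De Philippis--Maggi wall lemma does not give that $F$ and $H$ coincide near a contact point. It gives only full trace, $\Tr_PF=1$ $\HH^n$-a.e.\ on $P$. Full trace does not prevent $\supp|D\chi_K|$, with $K:=H\setminus F$, from accumulating at $P$: for example, components of $K$ of size comparable to their heights $h_k\to0$ could cluster at a point of $P$ without affecting the a.e.\ trace. So neither ``$F=H$ near every point of $P$'' nor the slab $\{0<x_{n+1}<\delta(x')\}$ is established, and your connectedness argument only repeats the same overclaim.

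Your treatment of $t_0>0$ is sound and differs from the paper. Horizontal translation and compactness reduce to the attained case. The wall lemma for the $\Phi^-$-minimizer $\R^{n+1}\setminus F$, applied with a uniform radius below $t_0/2$, gives it full trace on all of $\{x_{n+1}=t_0\}$. Removing $B_R'\times(0,t_0)$ from $F$ then saves $(\Phi(e_{n+1})+\Phi(-e_{n+1}))\omega_nR^n$ at a lateral cost $O(t_0R^{n-1})$.

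For $t_0=0$, however, you invoke exactly the unproved local structure. Your claim that the density estimates keep the defect alive also fails. At a defect point of height $h_k\to0$, the density estimates for $F$ are satisfied at scales $r\gg h_k$ by the lower halfspace, which lies in $\R^{n+1}\setminus F$. They therefore give no lower bound on $|K\cap B_r|$ beyond scale $h_k$, and the defect can disappear in your limit.

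\textbf{The missing idea is that $K$ is itself a global $\Phi^-$-perimeter minimizer with ambient competitors.} Given $L$ with $L\triangle K\subset\subset W$, the set $G:=\{\chi_H-\chi_L\ge1\}=H\setminus L$ is an admissible competitor for $F$. Monotone truncation and subadditivity give $P_\Phi(G;W)\le P_\Phi(H;W)+P_{\Phi^-}(L;W)$. Full trace gives the exact splitting $P_\Phi(F;W)=P_\Phi(H;W)+P_{\Phi^-}(K;W)$. Together these yield $P_{\Phi^-}(K;W)\le P_{\Phi^-}(L;W)$.

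This supplies both facts you are missing:
\begin{enumerate}[label=(\roman*)]
\item The wall lemma applied to $K$ shows $\supp|D\chi_K|\cap P=\varnothing$, since full trace of $K$ would contradict $\Tr_PK=0$.
\item Support persistence for the $\Phi^-$-minimizers $K_j$ keeps the defect in the limit.
\end{enumerate}

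The paper then avoids any case analysis on $t_0$. It blows down by $s_j\to\infty$ around a fixed defect point $q$. Since $q/s_j\to0$, the limit defect $L$ satisfies $0\in\supp|D\chi_L|$, so $L$ has full trace on $P\cap B_1$. This contradicts the limiting identity $|D\chi_{H\setminus L}|=\HH^n\llcorner P+|D\chi_L|$ restricted to $P$. Your closing remark about multiplicity one points at this mechanism but does not supply it.
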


The wall-contact principle first gives full measure-theoretic trace of $F$ on
$P$.  If the phase defect $K:=H\setminus F$ were nonempty, a localized
BV/current peeling argument would make $K$ a global minimizer for the reversed
integrand $\Phi^-(\nu):=\Phi(-\nu)$.  A simultaneous second blow-down of $F$
and $K$, together with strong convergence of their perimeter measures, would
produce limits $L$ and $G=H\setminus L$ satisfying the exact measure identity
\[
 |D\chi_G|=\HH^n\llcorner P+|D\chi_L|.
\]
Wall contact gives full trace to $L$ and hence zero trace to $G$; restricting
the identity to $P$ would therefore give $0=2\HH^n\llcorner P$.  This replaces
the global Harnack mechanism of
Bombieri--Giusti rather than attempting to reproduce it for anisotropic
coordinate functions.  Once the defect has been excluded, the flat blow-down
is converted into expanding one-sheet graphical regions by the interior
regularity theory for minimizing parametric elliptic integrands
\cite{SchoenSimonAlmgren1977}.

It would also be interesting to recover this last local graphical step by
combining Savin's improvement-of-flatness scheme \cite{Savin2010} with the
anisotropic one-sided measure estimate of Du--Mooney--Yang--Zhu
\cite{DuMooneyYangZhu2025}.  For possibly singular Caccioppoli minimizers, that
route would require an additional scale-sharp rough contact estimate.  It is
not used here.

We conclude the introduction by indicating the organization of the paper.
In \cref{old-sec:preliminaries} we fix orientation conventions and record the
compactness, wall-trace, and thin-slab regularity results used later.
Sections~\ref{old-sec:barrier} and \ref{old-sec:proper-proof} give the direct
exterior-barrier proof of the already known two-dimensional statement.
Sections~\ref{old-sec:current-minimality}--\ref{old-sec:global-proof} prove
the main theorem for global anisotropic perimeter minimizers.

\section{Preliminaries}\label{old-sec:preliminaries}

Three preliminary ingredients will be used in the two parts of the paper.
We first record the anisotropic Euler--Lagrange equation in parametric and
graphical form, including the effect of reversing orientation for a non-even
integrand.  We then collect the compactness, density, and wall-contact
consequences needed for perimeter minimizers.  The last ingredient is a
thin-slab theorem for absolutely minimizing oriented boundaries; before
stating it, we fix the corresponding gauge-mass convention.

For a smooth oriented hypersurface $\Sigma^n\subset\R^{n+1}$ with unit normal
$\nu$, the $\Phi$-minimal equation is
\begin{equation}\label{old-eq:parametric-EL}
  \operatorname{tr}\big(D_T^2\Phi(\nu)\,\mathrm{II}_\Sigma\big)=0.
\end{equation}
If $\Omega\subset\R^n$ is open and $\Sigma$ is the graph of
$u:\Omega\to\R$ with upward unit normal
$(-Du,1)/(1+|Du|^2)^{1/2}$, then
\eqref{old-eq:parametric-EL} becomes
\begin{equation}\label{old-eq:graph-EL}
  \Phiop[u]
  :=\operatorname{tr}\big(D^2\varphi(Du)D^2u\big)=0,
  \qquad \varphi(p)=\Phi(-p,1),\quad p\in\R^n.
\end{equation}

Set
\begin{equation}\label{old-eq:reversed-integrand-definition}
 \Phi^-(z):=\Phi(-z).
\end{equation}
Reversing the orientation replaces $\Phi$ by $\Phi^-$.  Notice that $\Phi^-$
satisfies the same regularity and ellipticity assumptions as $\Phi$.

We next record the compactness and wall-contact consequences of
De Philippis--Maggi that will be used in the blow-down arguments
\cite{DePhilippisMaggi2015}.

\begin{proposition}[Compactness and support convergence]
\label{old-prop:DPM-compactness}
Let $E_j$ be global $\Phi$-perimeter minimizers with locally uniformly
bounded perimeter, meaning that
$\sup_j|D\chi_{E_j}|(K)<\infty$ for every compact set $K$.  After passing to a
subsequence, there is a set $E$ of locally finite perimeter such that
\begin{equation}\label{old-eq:DPM-convergence}
  \chi_{E_j}\to\chi_E\quad\text{in }L^1_{\mathrm{loc}},
  \qquad
  D\chi_{E_j}\stackrel{*}{\rightharpoonup}D\chi_E,
  \qquad
  |D\chi_{E_j}|\stackrel{*}{\rightharpoonup}|D\chi_E|.
\end{equation}
The limit $E$ is a global minimizer, and
$\supp|D\chi_{E_j}|$ converges to $\supp|D\chi_E|$ locally in Hausdorff
distance.
\end{proposition}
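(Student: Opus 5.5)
The plan is the standard three-step route for minimizers of elliptic integrands: BV compactness, a cut-and-paste (slicing) argument for minimality of the limit together with convergence of anisotropic perimeters, and then an upgrade to convergence of Euclidean total variations and of supports. Uniform ellipticity and the density estimates for $\Phi$-minimizers from \cite{DePhilippisMaggi2015} supply the last two steps.

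\textbf{Step 1: compactness.} The local perimeter bound and BV compactness on an exhausting sequence of balls, combined with a diagonal argument, give a subsequence and a set $E$ of locally finite perimeter with $\chi_{E_j}\to\chi_E$ in $L^1_{\rm loc}$. Then $D\chi_{E_j}\stackrel{*}{\rightharpoonup}D\chi_E$, since we may test against $\nabla\cdot\phi$ with $\phi\in C^1_c$ and use the uniform mass bound. After a further subsequence, $|D\chi_{E_j}|\stackrel{*}{\rightharpoonup}\mu$ for some Radon measure $\mu\ge|D\chi_E|$.

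\textbf{Step 2: minimality and energy convergence.} Fix a competitor $F$ with $E\triangle F\Subset W$, and choose $R$ with $W\subset B_R$.
\begin{itemize}
\item By the coarea formula and Fatou, for a.e.\ $r\in(R,2R)$ we have $\HH^n(\partial B_r\cap(E_j\triangle E))\to0$ along a subsequence, and $\mu(\partial B_r)=0$.
\item For such $r$, set $F_j:=(F\cap B_r)\cup(E_j\setminus B_r)$. This is an admissible competitor for $E_j$ in a slightly larger ball, so
\[
P_\Phi(E_j;B_r)\le P_\Phi(F;B_r)+\lambda^{-1}\HH^n(\partial B_r\cap(E_j\triangle E)).
\]
\item Lower semicontinuity of $P_\Phi$ under weak-$*$ convergence holds because $\Phi$ is convex and one-homogeneous. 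Hence $P_\Phi(E;B_r)\le P_\Phi(F;B_r)$, which shows that $E$ is a global minimizer.
\item Taking $F=E$ gives $P_\Phi(E_j;B_r)\to P_\Phi(E;B_r)$ for a.e.\ $r$.
\end{itemize}

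\textbf{Step 3: total variation convergence (the main obstacle).} Without evenness or strict convexity of $\Phi$ in all directions, Reshetnyak's theorem does not apply directly. I would instead use ellipticity. For $\nu\in\mathbb S^n$ and $\tau\perp\nu$, the Hessian of $|\cdot|$ at $\nu$ is the identity on $\nu^\perp$. By \eqref{old-eq:ellipticity}, the one-homogeneous function $\Phi_\delta:=\Phi-\delta|\cdot|$ therefore still has nonnegative tangential Hessian for $\delta<\lambda$, so it is convex. Moreover $\Phi_\delta\ge\lambda-\delta>0$ on the sphere.

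Lower semicontinuity applied to $\Phi_\delta$, together with the energy convergence from Step 2, gives
\[
\delta\limsup_j|D\chi_{E_j}|(B_r)\le \delta|D\chi_E|(B_r).
\]
Hence $\mu(B_r)\le |D\chi_E|(B_r)$. Combined with $\mu\ge|D\chi_E|$ and localization to arbitrary balls, this yields $\mu=|D\chi_E|$.

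\textbf{Step 4: Hausdorff convergence of supports.} I would use the uniform density estimates for $\Phi$-minimizers: there is $c=c(n,\lambda)>0$ such that for every $x\in\supp|D\chi_{E_j}|$ and every $r>0$,
\begin{itemize}
\item $|D\chi_{E_j}|(B_r(x))\ge cr^n$, and
\item both $E_j\cap B_r(x)$ and $B_r(x)\setminus E_j$ have volume at least $cr^{n+1}$.
\end{itemize}
The two inclusions then follow separately.
\begin{itemize}
\item Suppose $x_j\in\supp|D\chi_{E_j}|$ and $x_j\to x$. The volume bounds pass to the $L^1_{\rm loc}$ limit, so both phases of $E$ have positive measure in every ball around $x$. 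By the relative isoperimetric inequality, $x\in\supp|D\chi_E|$.
\item Conversely, suppose $x\in\supp|D\chi_E|$. Then $|D\chi_E|(B_r(x))>0$ for every $r$, and weak-$*$ convergence gives $|D\chi_{E_j}|(B_{2r}(x))>0$ for large $j$. Thus $\supp|D\chi_{E_j}|$ meets $B_{2r}(x)$.
\end{itemize}
Together with the uniform local finiteness, these two inclusions give local Hausdorff convergence.
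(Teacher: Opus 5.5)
Your proof is correct, but it takes a different route from the paper: the paper's proof is essentially a citation.

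\textbf{How the paper argues.} It applies De Philippis--Maggi, Theorem~2.9, with the degenerate container $H=\R^{n+1}$ and $\Lambda=0$. This gives compactness, minimality of the limit and the three convergences. It then quotes (2.60)--(2.61) there for the two support inclusions, and their Lemma~2.8 (the uniform lower density estimate) to make those inclusions uniform on compact sets.

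\textbf{How you argue.} You rebuild the statement from first principles.
\begin{enumerate}
\item Slicing and cut-and-paste give minimality of $E$ and convergence of $P_\Phi(E_j;B_r(x))$ for a.e.\ $r$.
\item You observe that $\Phi-\delta|\cdot|$ stays convex for $\delta<\lambda$: the Hessian of a one-homogeneous function annihilates the radial direction, so tangential ellipticity is enough. Via Reshetnyak lower semicontinuity, this upgrades $\Phi$-energy convergence to $|D\chi_{E_j}|\stackrel{*}{\rightharpoonup}|D\chi_E|$.
\item Volume density estimates and lower semicontinuity on open balls give the two support inclusions.
\end{enumerate}

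\textbf{What each route buys.} Your version shows exactly where uniform ellipticity enters. It is used only in the step from energy convergence to Euclidean perimeter convergence, which the paper stresses is stronger than ordinary $BV$ lower semicontinuity. It also makes visible that evenness is never used. You still borrow the density estimates, which are the one genuinely nontrivial input in both routes. The citation route is shorter and stays inside the De Philippis--Maggi framework, whose wall-trace lemma and trace identities the paper needs anyway.

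\textbf{Bookkeeping to fix when you write it up.}
\begin{itemize}
\item The good radii must work along a single subsequence, for all $j$ and for countably many centers. Alternatively, integrate the cut-and-paste inequality in $r$ and apply Fatou to the weak-$*$ limit of $\Phi(\nu_{E_j})|D\chi_{E_j}|$; this avoids sub-subsequences altogether.
\item The phrase ``uniform local finiteness'' in Step~4 should be replaced by an explicit compactness argument. Use compactness of $\overline{B_R}$ for the inclusion of limits of support points. Use a finite cover of $\supp|D\chi_E|\cap\overline{B_R}$ by small balls for the reverse inclusion.
\end{itemize}
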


Here $\stackrel{*}{\rightharpoonup}$ denotes local weak-star convergence of
Radon measures.

\begin{proof}
Apply \cite[Theorem~2.9, (2.57)--(2.58)]{DePhilippisMaggi2015} with the
degenerate container $H=\R^{n+1}$, the fixed integrand $\Phi$, and
$\Lambda=0$, first on a ball and then on an exhaustion of $\R^{n+1}$.
A diagonal subsequence gives the three convergences in
\eqref{old-eq:DPM-convergence} and preserves global minimality.  Notice that
the third convergence is convergence of the full Euclidean perimeter
measures; it is stronger than the lower semicontinuity supplied by ordinary
$BV$ compactness.

For completeness, the two directions of support convergence are precisely
the full-space versions of \cite[(2.60)--(2.61)]{DePhilippisMaggi2015}.
Indeed, points of $\supp|D\chi_E|$ are approximated by points of
$\supp|D\chi_{E_j}|$, while every convergent sequence
$x_j\in\supp|D\chi_{E_j}|$ has its limit in $\supp|D\chi_E|$.
The uniform lower density estimate
\cite[Lemma~2.8]{DePhilippisMaggi2015} makes these implications uniform on
compact subsets, and hence gives local Hausdorff convergence.
\end{proof}

If $F\subset H$ has locally finite perimeter and $P=\partial H$, then
$\Tr_PF$ denotes the one-sided $BV$ trace of $\chi_F$ on $P$, taken from
inside $H$.

\begin{proposition}[Wall-trace maximum principle]\label{old-prop:wall-trace}
Let $R>0$, let $F\subset H=\{x_{n+1}>0\}$, and set $P=\partial H$.
Suppose that $F$ is a $\Phi$-perimeter minimizer in $B_{2R}$ in the sense of
\cref{old-def:perimeter-minimizer}, with ambient competitors not constrained to
lie in $H$.  If
$0\in\supp|D\chi_F|$, then
\begin{equation}\label{old-eq:full-wall-trace}
  \Tr_PF=1\qquad\HH^n\text{-a.e. on }P\cap B_R.
\end{equation}
\end{proposition}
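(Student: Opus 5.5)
The plan is to read \cref{old-prop:wall-trace} off the wall-contact maximum principle of De Philippis--Maggi \cite{DePhilippisMaggi2015}, as a direct specialization. The substance is the statement that a minimizer confined to a halfspace and touching the wall at an interior point must ``wet'' the wall completely near that point. Because ambient competitors are allowed, $F$ is also a minimizer in the container $H$ with a constant adhesion coefficient. The relevant setting is the De Philippis--Maggi capillarity functional with $\Lambda=0$: the energy is $P_\Phi(F;H\cap W)+\int_{P\cap W}\sigma\,\Tr_P F\,d\HH^n$. With ambient competitors, the cost of the wall portion is exactly $\Phi(-e_{n+1})$ times the trace, whether or not the competitor leaves $H$.

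\textbf{Step 1: reduce to container minimality.} For any competitor $G\subset H$ with $G\triangle F\Subset W\subset B_{2R}$, write the full-space perimeter as
\[
P_\Phi(G;W)=P_\Phi(G;W\cap H)+\int_{P\cap W}\Phi(-e_{n+1})\,\Tr_PG\,d\HH^n .
\]
Here we use the convention $-D\chi_G=\nu_G|D\chi_G|$; the outer normal of $G$ on $P$ is $-e_{n+1}$. Ambient minimality of $F$ then says that $F$ minimizes the capillarity energy in $H\cap B_{2R}$ with relative adhesion coefficient
\[
\sigma=\Phi(-e_{n+1})>0 .
\]
This coefficient is the extreme value: it corresponds to the wall being no cheaper than an interface. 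Minimality against all ambient competitors moreover gives the strict ``superhydrophobic-free'' regime in which the De Philippis--Maggi maximum principle applies. More precisely, the admissibility condition there is $|\sigma|$ bounded by $\Phi(\pm e_{n+1})$, and we would check the sign matches.

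\textbf{Step 2: invoke the maximum principle, or argue directly if needed.} The wall-contact statement in \cite{DePhilippisMaggi2015} asserts that if $\supp|D\chi_F|$ touches $P$ at $0$, then $\Tr_PF=1$ a.e.\ on $P\cap B_R$, via density estimates. If the cited form is not exactly this, we would prove it directly by contradiction. Suppose $\{\Tr_PF=0\}$ has positive $\HH^n$-measure in $P\cap B_R$. Blow up at a Lebesgue point $x_0$ of that set, using the compactness in \cref{old-prop:DPM-compactness} applied to the full-space minimizer $F$.

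The limit is a global minimizer $F_\infty\subset H$ with zero trace on $P$, so $F_\infty$ has no boundary on $P$. Its support lies in $\overline H$. The lower density estimate, applied near support points close to $P$, combined with the vanishing trace, then forces the boundary to stay uniformly away from $P$ at that scale. This gives the local statement that $\supp|D\chi_F|\cap P$ is relatively open and closed in $P\cap B_R$ within the zero-trace region. A connectedness argument on the ball finally contradicts $0\in\supp|D\chi_F|$ unless the trace is identically $1$.

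\textbf{Main obstacle.} The main obstacle is Step 2 when done by hand. One must show that a contact point of the support with $P$ at which the interface meets the wall forces trace one, that is, exclude an interface touching $P$ tangentially from inside while $F$ has zero trace. For this I would first try a calibration-type comparison: translate $P$ slightly into $H$ and cut $F$ by the slab $\{0<x_{n+1}<\varepsilon\}$. Removing $F\cap\{x_{n+1}<\varepsilon\}$ costs at most $\Phi(-e_{n+1})$ times the slab-section area. By the coarea formula and ellipticity this is strictly less than the perimeter removed, unless the removed part is flat. Together with the density estimates, this yields the required gap.
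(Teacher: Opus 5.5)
Your core plan, reading the statement off the wall-contact lemma of De Philippis--Maggi, is exactly the paper's proof, which is one line. After translating and dilating by $R$, the statement is \cite[Lemma~2.13]{DePhilippisMaggi2015}. The hypothesis (2.93) of that lemma \emph{is} ambient minimality of a set contained in $H$. The lemma gives the dichotomy ``$0\notin\supp|D\chi_F|$ or full trace on the wall,'' and the first alternative is excluded by assumption.

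So no reformulation is needed, and your Step~1 should be dropped rather than repaired. The identity $P_\Phi(G;W)=P_\Phi(G;W\cap H)+\Phi(\nu_H)\int_{P\cap W}\Tr_PG\,d\HH^n$ for $G\subset H$ is fine. However, the resulting adhesion coefficient $\sigma=\Phi(\nu_H)$ is the degenerate endpoint of the admissible range: the wall costs exactly as much as an interface with normal $\nu_H$. The capillarity theory excludes this value through a strict inequality. The ``sign check'' you postpone would therefore fail, and ambient minimality does not put you in any strict regime. This is why one cites the lemma formulated for ambient minimizers rather than the capillarity results.

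Your fallback ``direct'' argument in Step~2 has a genuine gap.

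\textbf{The key step is assumed.} You assert that the lower density estimate together with vanishing trace keeps the boundary away from $P$. That is the strict maximum principle itself, i.e.\ the whole content of the lemma (its isotropic analogue is \cite{Simon1987}). Density estimates cannot give it. A support point at distance $d$ from $P$ carries its density mass in a ball of radius $d$ lying inside $H$, and this is perfectly compatible with zero trace; think of an interface touching $P$ tangentially from inside.

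\textbf{The blow-up does not help.} The blow-up at a Lebesgue point of $\{\Tr_PF=0\}$ may have empty boundary, so it yields no open--closed structure. The connectedness step is asserted rather than derived. It also does not address a wetted region and a dry region of positive measure coexisting in $P\cap B_R$.

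\textbf{The slab truncation is not admissible.} In general $F\cap\{x_{n+1}<\varepsilon\}$ need not be compactly contained in $B_{2R}$. Once you localize to a ball, calibration with the constant field $\nabla\Phi(\nu_H)$ gives only the non-strict gain $\int(\Phi(\nu_F)-\nabla\Phi(\nu_H)\cdot\nu_F)\,d\HH^n$ over the removed interface. This gain vanishes exactly when $\nu_F=\nu_H$. Nothing in your sketch bounds it below by the lateral cost that the localization creates on $F\cap\{x_{n+1}<\varepsilon\}\cap\partial B_r$.

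Closing these points is the actual substance of the cited lemma. The citation is therefore not a convenience here; it carries the proof.
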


\begin{proof}
After translating and dilating by $R$, this is
\cite[Lemma~2.13]{DePhilippisMaggi2015}.  Its hypothesis (2.93) is exactly
the ambient minimality assumed here.  Since the alternative
$0\notin\supp|D\chi_F|$ is excluded, that lemma gives full trace on the
wall; restricting back to $P\cap B_R$ gives
\eqref{old-eq:full-wall-trace}.
\end{proof}

\begin{remark}
The hypothesis in \cref{old-prop:wall-trace} is ambient minimality: competitors
may cross $P$, and no free-boundary or obstacle constraint is imposed.  This
is precisely the form of condition (2.93) in Lemma~2.13 of
\cite{DePhilippisMaggi2015} needed below.
The corresponding strict maximum principle in the isotropic area-minimizing
setting goes back to \cite{Simon1987}.
\end{remark}

The remaining regularity input is formulated for absolutely minimizing
oriented boundaries.  Since reversing orientation changes the energy when
$\Phi$ is not even, we first make precise the mass used in that statement.

For $v\in BV_{\mathrm{loc}}(\R^{n+1};\mathbb Z)$, let
$\llbracketset{v}$ denote the top-dimensional current with integer
multiplicity $v$ and the standard orientation.
For a set $E$ of locally finite perimeter,
$\llbracketset{E}:=\llbracketset{\chi_E}$ is the
multiplicity-one current of integration over $E$, and we set
\begin{equation}\label{old-eq:boundary-current}
  T_E:=\partial\llbracketset{E}.
\end{equation}
Here $\partial$ denotes current boundary.  If
$T=\vec T\,\|T\|$ is the polar decomposition of a locally integral
codimension-one current, then $\|T\|$ is its mass measure, $\vec T$ is its
unit oriented $n$-vector, and $\supp T:=\supp\|T\|$.  We fix the
Hodge-star identification between oriented $n$-vectors and normal vectors and
define the normal vector measure $\mathbf n(T):=(*\vec T)\,\|T\|$, with the
choice of sign for which
\begin{equation}\label{old-eq:normal-vector-measure}
 \mathbf n(\partial\llbracketset{v})=-Dv,
 \qquad
 \mathbf n(T_E)=\nu_E|D\chi_E|=-D\chi_E.
\end{equation}
For locally integral codimension-one currents $T$ and $S$, we have
$\mathbf n(T+S)=\mathbf n(T)+\mathbf n(S)$,
$\mathbf n(-T)=-\mathbf n(T)$, and $|\mathbf n(T)|=\|T\|$.

If $\mu$ is an $\R^{n+1}$-valued Radon measure and $A$ is Borel, define
\begin{equation}\label{old-eq:vector-gauge}
 \mathcal F_\Phi(\mu;A)
 :=\int_A\Phi\left(\frac{d\mu}{d|\mu|}\right)\,d|\mu|,
\end{equation}
where $d\mu/d|\mu|$ is the polar density.  For a locally integral $n$-current
$T$, set
\begin{equation}\label{old-eq:oriented-gauge-mass}
 \mu_T^\Phi(A):=\mathcal F_\Phi(\mathbf n(T);A),
 \qquad
 \Mphi(T;A):=\mu_T^\Phi(A).
\end{equation}
In particular, our orientation convention gives
\begin{equation}\label{old-eq:set-current-mass}
  \Mphi(T_E;A)=P_\Phi(E;A).
\end{equation}
For a non-even integrand this is an oriented gauge mass, rather than an
unoriented elliptic mass.

An integral $n$-cycle is an integral $n$-current $S$ satisfying $\partial S=0$.
A locally integral $n$-current $T$ is said to be absolutely $\Phi$-mass minimizing
in an open set $U$ if
\begin{equation}\label{old-eq:local-absolute-mass-minimality}
 \Mphi(T;W)\le \Mphi(T+S;W)
\end{equation}
whenever $W\subset\subset U$ is open and $S$ is a compactly supported integral
$n$-cycle with $\supp S\subset W$.

With this convention in place, we turn to thin-slab regularity.  For $r>0$
and $\nu\in\mathbb S^n$, set
\begin{equation}\label{old-eq:cylinder}
 \mathcal C_r(\nu)
 :=\{x\in\R^{n+1}:
 |x-(x\cdot\nu)\nu|<r,\ |x\cdot\nu|<r\}.
\end{equation}
This is the open cylinder centered at the origin, with axis $\R\nu$,
tangential radius $r$, and height $2r$.

\begin{proposition}[Interior thin-slab regularity]
\label{old-prop:epsilon-regularity}
There exist constants
\[
 \varepsilon_0>0,
 \qquad \beta\in(0,1),
 \qquad C<\infty,
\]
depending only on $n$ and $\lambda$, with the following property.  Let $E$ be
a set of locally finite perimeter, let $r>0$, let
$x_0\in\supp|D\chi_E|$, and suppose that the oriented boundary current
$T_E$ is absolutely $\Phi$-mass minimizing in
$B_{2r}(x_0)$.  If $Q$ is an affine hyperplane through $x_0$ and
\begin{equation}\label{old-eq:thin-slab-hypothesis}
 \supp|D\chi_E|\cap B_r(x_0)
 \subset\{x:\operatorname{dist}(x,Q)<\varepsilon_0r\},
\end{equation}
then
\[
 M:=\supp|D\chi_E|\cap B_{\beta r}(x_0)
\]
is a connected embedded $C^2$ hypersurface, with
\[
 \overline M\setminus M
 \subset\partial B_{\beta r}(x_0).
\]
In particular, there are no singular points or additional components of the
support in $B_{\beta r}(x_0)$.  On $M$, the notation $\nu_E$ denotes the
continuous classical unit normal extending the measure-theoretic outer
normal.  It satisfies
\begin{equation}\label{old-eq:SSA-normal-estimate}
 |\nu_E(x)-\nu_E(y)|
 \le C\frac{|x-y|}{r},
 \qquad x,y\in M.
\end{equation}
\end{proposition}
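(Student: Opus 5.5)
The plan is to reduce \cref{old-prop:epsilon-regularity} to the Schoen--Simon--Almgren $\varepsilon$-regularity theorem for currents minimizing a parametric elliptic integrand \cite{SchoenSimonAlmgren1977}. That theorem needs a \emph{multiplicity-one, small-excess} hypothesis. The work is therefore to extract small excess from the thin-slab hypothesis \eqref{old-eq:thin-slab-hypothesis}, using only minimality and the structure of a set boundary, since no monotonicity formula is available.

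\textbf{Normalization and density bounds.} After translating, rotating and dilating, I take $x_0=0$, $r=1$ and $Q=\{x_{n+1}=0\}$. Since $\Phi$ enters only through \eqref{old-eq:regularity-bounds}--\eqref{old-eq:ellipticity}, which are scale invariant, all constants stay uniform in $n$ and $\lambda$. Absolute minimality of $T_E$ in $B_2$ gives in particular minimality of $E$ against set competitors, by \eqref{old-eq:set-current-mass}. Two density bounds follow.
\begin{itemize}[leftmargin=2em]
\item An upper bound $|D\chi_E|(B_s(y))\le C s^n$ for $B_s(y)\subset B_{3/2}$. This comes from comparing with $E\setminus B_s(y)$ and $E\cup B_s(y)$ and using $\lambda\le\Phi\le\lambda^{-1}$.
\item The uniform lower bound $|D\chi_E|(B_s(y))\ge c s^n$ for $y\in\supp|D\chi_E|$, from \cite[Lemma~2.8]{DePhilippisMaggi2015} as already used in \cref{old-prop:DPM-compactness}.
\end{itemize}

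\textbf{Phase structure and degree one.} Let $S_\varepsilon=\{|x_{n+1}|<\varepsilon_0\}$. In the cylinder $\mathcal C_{1/2}(e_{n+1})$, $\chi_E$ has zero derivative off $S_\varepsilon$. Hence it is constant on each of the two components above and below the slab. If the two constants agreed, I would replace $E$ inside a slightly smaller cylinder by that constant phase across the slab. I choose the cutting radius by coarea so that the lateral cost is at most $C\varepsilon_0$. The lower density bound at $0$ gives removed energy at least $c\lambda$, which contradicts minimality once $\varepsilon_0$ is small. So the phases differ, and $T_E\llcorner\mathcal C_{1/2}$ has boundary supported on the lateral side and projects onto $Q$ with degree $\pm1$. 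The same cut-and-paste comparison, now with the flat disk $Q\cap\mathcal C_{s}$ plus a lateral band of area $\le C\varepsilon_0$, gives the excess bound
\[
\Mphi(T_E;\mathcal C_s)-\Mphi\bigl(\pm\llbracketset{Q\cap\mathcal C_s}\bigr)\le C\varepsilon_0
\]
for a good $s\in(1/4,1/2)$. By uniform ellipticity of $\Phi$, this controls the Euclidean tilt-excess $\int_{\mathcal C_s}|\nu_E\mp e_{n+1}|^2\,d|D\chi_E|\le C\varepsilon_0$. No evenness is needed: we always compare oriented currents of the same orientation, with the gauge mass \eqref{old-eq:oriented-gauge-mass}.

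\textbf{Regularity and conclusion.} With multiplicity one, degree one, and small height and excess, the SSA theorem gives $\beta$ such that $T_E\llcorner\mathcal C_{2\beta}$ is the graph of a $C^{1,\alpha}$ function $u$ over $Q\cap\mathcal C_{2\beta}$ with small $C^{1,\alpha}$ norm. Since $\Phi\in C^3$, $u$ solves the uniformly elliptic equation \eqref{old-eq:graph-EL}, and Schauder theory upgrades it to $C^{2,\alpha}$ with $|D^2u|\le C$. This yields \eqref{old-eq:SSA-normal-estimate} after rescaling. By the support convergence of the density bounds, $\supp|D\chi_E|\cap B_\beta$ equals the graph intersected with the ball. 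It is therefore a single connected embedded $C^2$ sheet whose closure meets $\partial B_\beta$ only at its frontier, with no singular points or extra components.

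\textbf{Main obstacle.} I expect the hardest step to be the excess estimate without monotonicity: the slicing choice and the exclusion of equal phases must be made uniform. If the direct comparison proves awkward, I would instead argue by contradiction and compactness via \cref{old-prop:DPM-compactness}. In that approach, a sequence with $\varepsilon_0\to0$ converges, with strong convergence of perimeter measures, to a minimizer supported in $Q$, hence to $\pm$ a halfspace. Strong convergence of $|D\chi_{E_j}|$ then yields excess tending to zero directly.
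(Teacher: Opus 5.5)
Your proposal is correct, but it takes a longer route than the paper. The paper's proof is a direct citation. After translating $x_0$ to the origin and dilating by $r^{-1}$, it regards $T_E$ as an absolutely minimizing oriented boundary for the integrand $\xi\mapsto\Phi(*\xi)$ on oriented $n$-vectors, with no evenness required. It then notes that \eqref{old-eq:thin-slab-hypothesis} is exactly the slab condition (37) of \cite[Part~I, Theorem~1.2]{SchoenSimonAlmgren1977}. From that theorem it reads off both the description of the \emph{entire} support in the smaller ball and the normal estimate (38).

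So the premise of your plan, that Schoen--Simon--Almgren must be fed small excess, does not hold. Their theorem is already a height-bound theorem, so your excess derivation and Schauder upgrade are not needed for the paper's purposes.

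Your reduction is nevertheless sound, and it shows explicitly why a height bound suffices in codimension one without monotonicity. Cut-and-paste against the lower density bound forces distinct phases across the slab, hence degree $\pm1$. Comparison with a flat disk plus a lateral band of area $O(\varepsilon_0)$ then gives small cylindrical $\Phi$-excess. When you convert to tilt-excess, say explicitly that you combine two facts:
\begin{itemize}
\item the pointwise bound $\Phi(\nu)-D\Phi(e)\cdot\nu\ge c(n,\lambda)|\nu-e|^2$;
\item the first-moment identity $\int_{\mathcal C_s}\nu_E\,d|D\chi_E|=\pm\,\omega_n s^n e_{n+1}+O(\varepsilon_0)$, obtained from the divergence theorem and the phase structure.
\end{itemize}
The second is where degree one enters.

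What your route buys is independence from the precise form of the SSA statement. Any excess-decay theorem can be inserted, for example \cite[5.3]{Federer1969} or the $\varepsilon$-regularity of \cite{DePhilippisMaggi2015}, and the roles of multiplicity one and orientation become visible. The paper's route buys brevity, at the cost of resting entirely on the cited formulation.

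Two small repairs are needed. First, the identification of the whole support in $B_\beta$ comes from the current identity $T_E\llcorner\mathcal C_{2\beta}=$ graph of $u$, not from ``support convergence.'' Second, connectedness of $M$ and $\overline M\setminus M\subset\partial B_\beta$ use the easy fact that $\{y:|y|^2+u(y)^2<\beta^2\}$ is star-shaped when $u(0)=0$ and $|Du|$ is small. In your compactness fallback, the integrands must be allowed to vary along the sequence, so that $\varepsilon_0$ depends only on $(n,\lambda)$.
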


\begin{proof}
Translate $x_0$ to the origin and dilate by $r^{-1}$.  Under the
Hodge-star identification fixed above, the corresponding parametric
integrand on oriented simple $n$-vectors is
\[
 \mathcal F(\xi)=\Phi(*\xi).
\]
Its ellipticity constants are controlled by $\lambda$, and no symmetry
under $\xi\mapsto-\xi$ is required.  The rescaled current belongs to the
class of absolutely minimizing oriented boundaries considered in
\cite[Part~I, Theorem~1.2]{SchoenSimonAlmgren1977}, and
\eqref{old-eq:thin-slab-hypothesis} is precisely condition~(37) there.
The conclusion of that theorem identifies the \emph{entire} support in
the smaller ball, not only the component containing the origin: it is a
connected $C^2$ hypersurface whose relative boundary lies on the sphere.
Estimate~(38) of the same theorem gives the asserted estimate for the
unit normal.  Scaling back proves the proposition.
\end{proof}

\begin{corollary}[One-sheet graphical convergence]
\label{old-prop:strict-flat-graphical}
Let $E_j$ be $\Phi$-perimeter minimizers in $B_2$, suppose that $T_{E_j}$ is
absolutely $\Phi$-mass minimizing in $B_2$, and assume
$0\in\supp|D\chi_{E_j}|$.  Let $H$ be a halfspace with measure-theoretic outer
unit normal $\nu_H$, let $0\in P:=\partial H$, and suppose that
\begin{equation}\label{old-eq:strict-BV-flatness}
 \chi_{E_j}\to\chi_H
 \quad\text{in }L^1(B_2),
 \qquad
 D\chi_{E_j}\stackrel{*}{\rightharpoonup}D\chi_H,
 \qquad
 |D\chi_{E_j}|\stackrel{*}{\rightharpoonup}|D\chi_H|.
\end{equation}
Then there exists $\theta_*=\theta_*(n,\lambda)>0$ such that, for all
sufficiently large $j$, there is a function
$u_j:P\cap B_{\theta_*}\to\R$ of class $C^2$ for which
\begin{equation}\label{old-eq:flat-graph-representation}
 \supp|D\chi_{E_j}|\cap\mathcal C_{\theta_*}(\nu_H)
 =\{y+u_j(y)\nu_H:y\in P\cap B_{\theta_*}\}.
\end{equation}
Moreover,
\begin{equation}\label{old-eq:C1-flat-convergence}
 \|u_j\|_{C^1(P\cap B_{\theta_*})}\longrightarrow0,
 \qquad
 \sup_{\supp|D\chi_{E_j}|\cap
       \mathcal C_{\theta_*}(\nu_H)}
 |\nu_{E_j}-\nu_H|\longrightarrow0.
\end{equation}
Thus the whole support in the smaller cylinder is one sheet, not merely one
regular component.
\end{corollary}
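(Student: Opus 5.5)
We first obtain uniform flatness from the strict convergence in \eqref{old-eq:strict-BV-flatness}, and then feed it into \cref{old-prop:epsilon-regularity}.

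\emph{Hausdorff flatness.} Fix $\varepsilon\in(0,\varepsilon_0)$. We claim that, for large $j$,
\[
\supp|D\chi_{E_j}|\cap B_{3/2}\subset\{\operatorname{dist}(\cdot,P)<\varepsilon\}.
\]
Suppose not. Then there are points $x_j\in\supp|D\chi_{E_j}|\cap \overline B_{3/2}$ with $\operatorname{dist}(x_j,P)\ge\varepsilon$. Since each $E_j$ is a $\Phi$-perimeter minimizer in $B_2$, the uniform lower density estimate of De~Philippis--Maggi applies. It gives $|D\chi_{E_j}|(B_\rho(x_j))\ge c\rho^n$ for a fixed $\rho<\min(\varepsilon/2,1/4)$. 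Pass to a subsequence with $x_j\to x$. The weak-star convergence $|D\chi_{E_j}|\stackrel{*}{\rightharpoonup}\HH^n\llcorner P$, tested on the closed ball $\overline B_{2\rho}(x)$, then gives
\[
\HH^n(P\cap\overline B_{2\rho}(x))\ge c\rho^n>0.
\]
This is impossible, because $\operatorname{dist}(x,P)\ge\varepsilon>2\rho$.

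\emph{Regularity near each point of $P$.} Apply \cref{old-prop:epsilon-regularity} with $r=1/2$, centered at any point $x_0\in\supp|D\chi_{E_j}|\cap B_{1}$, and with $Q$ the plane through $x_0$ parallel to $P$. The absolute minimality of $T_{E_j}$ in $B_2\supset B_{1}(x_0)$ is assumed. Since $\operatorname{dist}(x_0,P)<\varepsilon$, the slab hypothesis holds with width $2\varepsilon<\varepsilon_0r$ once $\varepsilon$ is chosen small. The conclusion is that $M_j=\supp|D\chi_{E_j}|\cap B_{\beta/2}(x_0)$ is a connected embedded $C^2$ hypersurface. Its normal satisfies $|\nu_{E_j}(x)-\nu_{E_j}(y)|\le 2C|x-y|$.

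\emph{Normal close to $\nu_H$.} Combining the Lipschitz bound with the weak-star convergence $D\chi_{E_j}\stackrel{*}{\rightharpoonup}D\chi_H$, we show the normals tend to $\nu_H$ uniformly on small balls. The vector measures $-D\chi_{E_j}=\nu_{E_j}|D\chi_{E_j}|$ converge to $\nu_H\HH^n\llcorner P$, and the total variations also converge. On a ball $B_s(x_0)$ with $s\ll\beta$, the normal $\nu_{E_j}$ is within $2Cs$ of $\nu_{E_j}(x_0)$. Comparing the integrals of $-D\chi_{E_j}$ and of $|D\chi_{E_j}|$ over such a ball, whose mass is bounded below by density, shows $\nu_{E_j}(x_0)\cdot\nu_H\ge 1-o(1)-O(s)$. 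Letting $j\to\infty$ and then $s\to0$ yields
\[
\sup_{\supp|D\chi_{E_j}|\cap B_1}|\nu_{E_j}-\nu_H|\to0.
\]
This step needs some care, since the ball centers move with $j$. We handle it by compactness: take a limiting center and test the measures on a fixed ball around it.

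\emph{Graph representation.} Set $\theta_*=\beta/8$. The Hausdorff flatness shows that every vertical line through $y\in P\cap B_{\theta_*}$ meets the cylinder only where the support lies within $\varepsilon$ of $P$. The support is locally a $C^2$ hypersurface whose normal is nearly $\nu_H$, so it is locally a graph over $P$ with small gradient. We must show each vertical line meets the support exactly once. It meets it at least once, because $\chi_{E_j}\to\chi_H$ in $L^1$ and Fubini force a change of phase along almost every such line, and hence along every line by closedness. It meets it at most once by the single-sheet conclusion of \cref{old-prop:epsilon-regularity} centered at a point near $0$ with $r=1/2$. Indeed, $B_{\beta/2}(x_0)\supset\mathcal C_{\theta_*}(\nu_H)$, and a connected hypersurface with nearly vertical normal and boundary only on the sphere is a single graph. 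Two sheets over the same $y$ would give a disconnected piece, or else a fold, and a fold is excluded by the normal bound.

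Hence $u_j$ is defined and of class $C^2$, with $|u_j|<\varepsilon$ and $|Du_j|\to0$ because $\nu_{E_j}\to\nu_H$. This gives \eqref{old-eq:flat-graph-representation} and \eqref{old-eq:C1-flat-convergence}.

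The main obstacle is the one-sheet, as opposed to one-component, assertion in the graph step. It relies on the statement in \cref{old-prop:epsilon-regularity} that the entire support in the small ball is connected with boundary only on the sphere.
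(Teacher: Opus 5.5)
Your first three steps follow the paper's proof: confinement to a thin slab via the lower density estimate, thin-slab regularity, and uniform convergence of the normals from the strict convergence plus the Lipschitz normal bound. The gap is in the step you yourself flag as the main obstacle.

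\cref{old-prop:epsilon-regularity} tells you that $M=\supp|D\chi_{E_j}|\cap B_{\beta/2}(x_0)$ is \emph{connected}, not that it is a single graph. Your bridge, ``two sheets would give a disconnected piece, or else a fold,'' is asserted but not proved. Over a disk where the vertical projection is proper, $M$ splits into graphical components. The danger is that two of these components are joined, with no fold at all, through the thin shell near $\partial B_{\beta/2}(x_0)$, where the projection is not proper. General topology does not forbid this. In the thin solid annular cylinder $\{1<|x|<2,\ |z|<2\pi\epsilon\}$, the strip $\{z=\epsilon\theta,\ |\theta|<2\pi\}$ is connected, embedded and fold-free. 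It has nearly vertical normal and relative boundary on the boundary of the domain, yet it is two-sheeted. So the geometry of the ball must enter the argument.

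One repair is to lift the radial segments $t\mapsto t\omega$ from $x_0$ into $M$. Small slope makes $|\gamma(t)-x_0|^2$ strictly increasing along each lift. Each lift then runs until it reaches the sphere, and this monotonicity shows that the set of points reached is closed in $M$: a limit of reached points cannot lie beyond the exit radius of its own ray. That set is a graph over a star-shaped domain and is also open in $M$, so it equals $M$ by connectedness.

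The paper avoids this topology and does not use connectedness at all. Over $P\cap B_{2\theta_*}$, each component of the support is a covering of a simply connected disk, hence a full graph. By multiplicity one, each such component contributes at least $\omega_n\theta_*^n$ to $|D\chi_{E_j}|(\mathcal C_{\theta_*}(\nu_H))$. This quantity tends to $\omega_n\theta_*^n$ by the third convergence in \eqref{old-eq:strict-BV-flatness}, so two components are impossible. You use the mass convergence only for the normals, but it is exactly the tool that makes the sheet count clean.

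A smaller imprecision is in your ``at least once'' step. For fixed $j$, $L^1$ convergence and Fubini only show that the lines without a phase change form a set of small measure, not a null set. Existence of a support point over every $y$ should come from your confinement step instead. The support lies in $\{|x\cdot\nu_H|<\varepsilon\}$, so $\chi_{E_j}$ is constant on each of the two connected parts of the cylinder outside this slab. $L^1$ convergence forces these two constants to agree with those of $\chi_H$ for large $j$. A vertical segment missing the closed support would then have a tubular neighborhood on which $\chi_{E_j}$ is constant, which is impossible. In the paper, surjectivity comes for free from the covering argument.
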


\begin{proof}
Write
\[
 S_j:=\supp|D\chi_{E_j}|,
 \qquad
 \mu_j:=|D\chi_{E_j}|,
 \qquad
 \mu:=|D\chi_H|=\HH^n\llcorner P,
\]
and let $\pi(x):=x-(x\cdot\nu_H)\nu_H$ be the orthogonal projection onto
$P$.  The lower density estimate for perimeter minimizers, together with
$\mu_j\stackrel{*}{\rightharpoonup}\mu$, implies
\begin{equation}\label{old-eq:flat-support-convergence}
 S_j\longrightarrow P
 \qquad\text{locally in Hausdorff distance in }B_2.
\end{equation}
Indeed, a point of $S_j\cap B_{3/2}$ that remains a fixed positive
distance from $P$ carries a fixed amount of $\mu_j$ in a ball disjoint
from $P$, contradicting the convergence of $\mu_j$.  Conversely, if a
ball centered at a point of $P\cap B_{3/2}$ were disjoint from $S_j$
along a subsequence, the lower semicontinuity inequality on that open ball
would contradict the positivity of $\mu$ there.  Consequently, for all
sufficiently large $j$,
\[
 S_j\cap B_1
 \subset\{x:\operatorname{dist}(x,P)<\varepsilon_0\}.
\]
Applying \cref{old-prop:epsilon-regularity} with $r=1$ shows that
\[
 M_j:=S_j\cap B_\beta
\]
is a connected embedded $C^2$ hypersurface and
\begin{equation}\label{old-eq:normal-Lipschitz-local}
 |\nu_{E_j}(x)-\nu_{E_j}(y)|\le C|x-y|,
 \qquad x,y\in M_j.
\end{equation}

We next upgrade convergence of the oriented perimeter measures to uniform
convergence of the normals on a smaller ball.  If
$\eta\in C_c(B_\beta)$ is nonnegative, then
$-D\chi_{E_j}=\nu_{E_j}\mu_j$ and the three convergences in
\eqref{old-eq:strict-BV-flatness} give
\begin{align*}
 \int \eta|\nu_{E_j}-\nu_H|^2\,d\mu_j
 &=2\int\eta\,d\mu_j
   +2\nu_H\cdot\int\eta\,dD\chi_{E_j}\\
 &\longrightarrow
   2\int\eta\,d\mu
   +2\nu_H\cdot\int\eta\,dD\chi_H=0,
\end{align*}
because $D\chi_H=-\nu_H\mu$.  Together with
\eqref{old-eq:normal-Lipschitz-local} and the lower density estimate, this
implies
\begin{equation}\label{old-eq:uniform-normal-convergence}
 \sup_{M_j\cap B_{\beta/2}}|\nu_{E_j}-\nu_H|\longrightarrow0.
\end{equation}
Indeed, otherwise there are $\delta>0$ and
$x_j\in M_j\cap B_{\beta/2}$ at which the displayed difference is at least
$\delta$.  For fixed
$s<\min\{\beta/4,\delta/(2C)\}$, it is then at least $\delta/2$ on
$M_j\cap B_s(x_j)$, whose $\mu_j$-measure is at least $cs^n$.  This
contradicts the preceding integral convergence with a cutoff equal to one
on $B_{3\beta/4}$.

Set $\theta_*:=\beta/16$.  By
\eqref{old-eq:flat-support-convergence}, after increasing $j$ if necessary,
\[
 d_j:=\sup\{\operatorname{dist}(x,P):x\in S_j\cap B_1\}<\theta_*,
 \qquad d_j\longrightarrow0.
\]
Every point of
\[
 N_j:=M_j\cap\{x:|\pi(x)|<2\theta_*\}
\]
then lies in $B_{\beta/2}$.  By
\eqref{old-eq:uniform-normal-convergence}, the restriction of $\pi$ to
$N_j$ is a local diffeomorphism.  On each connected component it is also
proper over the disk $P\cap B_{2\theta_*}$: if the projections of a
sequence remain in a compact subset of that disk, the height bound
$|x\cdot\nu_H|\le d_j$ gives a convergent subsequence whose limit can meet
neither $\partial B_\beta$ nor the lateral boundary
$|\pi(x)|=2\theta_*$.  Connected components are closed in $N_j$, so this
limit remains in the same component.  Thus the image of each component is
both open and closed in $P\cap B_{2\theta_*}$, and each restriction is a covering map.
The disk is simply connected, so every component is the graph of one $C^2$
function over the whole disk.

There can be only one component.  Let
$\omega_n:=\HH^n(B_1^n)$.  The constancy theorem and the multiplicity-one
structure of the set boundary give
\[
 \mu_j\llcorner M_j=\HH^n\llcorner M_j.
\]
Because $d_j<\theta_*$, each graphical component
contributes at least
\[
 \int_{P\cap B_{\theta_*}}\sqrt{1+|Du|^2}\,d\HH^n
 \ge \omega_n\theta_*^n
\]
to $\mu_j(\mathcal C_{\theta_*}(\nu_H))$.  The cylinder is a continuity set
for $\mu=\HH^n\llcorner P$, and hence
\[
 \mu_j(\mathcal C_{\theta_*}(\nu_H))
 \longrightarrow
 \mu(\mathcal C_{\theta_*}(\nu_H))
 =\omega_n\theta_*^n.
\]
Two components would contradict this convergence.  At least one exists
because $0\in S_j$, so there is exactly one.

If $x\in S_j\cap\mathcal C_{\theta_*}(\nu_H)$, then
$|x|<\sqrt2\,\theta_*<\beta$ and $|\pi(x)|<\theta_*$, so $x\in N_j$.
Conversely, the part over $P\cap B_{\theta_*}$ of the unique graphical
component lies in this cylinder because its height is bounded by
$d_j<\theta_*$.  Denote its graphing function there by $u_j$.  These two
inclusions prove \eqref{old-eq:flat-graph-representation} for the whole
support.  Moreover,
$\|u_j\|_{L^\infty}\le d_j\to0$, while
\eqref{old-eq:uniform-normal-convergence} and
\[
 |Du_j|=
 \frac{|\pi(\nu_{E_j})|}{|\nu_{E_j}\cdot\nu_H|}
\]
give $\|Du_j\|_{L^\infty}\to0$.  This proves
\eqref{old-eq:C1-flat-convergence}.
\end{proof}

\section{Strict logarithmic exterior barriers in
\texorpdfstring{$\R^3$}{R3}}
\label{old-sec:barrier}

In this section, we begin the promised direct proof of the two-dimensional
statement.  Although its rigidity conclusion follows from Bergner's theorem,
the construction below is self-contained and works directly with the
anisotropic graph operator.  We construct strict exterior barriers with
$n=2$ and supporting normal $e_3$.  Their leading term is the
logarithmic fundamental solution of the linearized anisotropic graph operator;
an inverse-radius correction makes the comparison inequality strict.  After
dilation, the resulting exterior graphs converge to the supporting plane away
from a shrinking inner boundary while their ends remain vertically proper.
These are precisely the properties used in the first-contact argument of the
next section.

Recall the graph integrand and set
\begin{equation}\label{old-eq:graph-integrand}
  \varphi(p):=\Phi(-p,1),\qquad A:=D^2\varphi(0),
  \qquad p\in\R^2.
\end{equation}
The ellipticity assumption makes $A$ symmetric positive definite.  We
introduce the elliptic radius
\begin{equation}\label{old-eq:elliptic-radius}
  \rho(x):=(x^TA^{-1}x)^{1/2},
  \qquad x\in\R^2.
\end{equation}

\begin{lemma}[Strict anisotropic logarithmic barrier]
\label{old-lem:strict-barrier}
There exist constants $C_*=C_*(\Phi)>0$, $a_0=a_0(\Phi)>0$, and
$c_*=c_*(\Phi)>0$ such that, for every $a\in(0,a_0)$, the function
\begin{equation}\label{old-eq:barrier}
  b(x)
  :=-a\log\rho(x)+C_*a^2\big(\rho(x)^{-1}-1\big),
  \qquad \rho(x)\ge1,
\end{equation}
satisfies
\begin{align}
 b&=0 &&\text{on }\{\rho=1\},\label{old-eq:barrier-boundary}\\
 b&<0,\qquad \lim_{\rho\to\infty}b=-\infty,
   &&\text{on }\{\rho>1\},\label{old-eq:barrier-proper}\\
 \Phiop[b]&\ge c_*a^2\rho^{-3}>0
   &&\text{on }\{\rho>1\}.\label{old-eq:barrier-strict}
\end{align}
Moreover, $|Db|$ is uniformly small when $a_0$ is chosen sufficiently small.
\end{lemma}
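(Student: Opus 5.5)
The plan is to treat $\Phiop[b]$ as a perturbation of the constant-coefficient operator $\operatorname{tr}(A\,D^2\,\cdot\,)$. In the linear coordinates $y:=A^{-1/2}x$, where $\rho=|y|$, this operator becomes the flat Laplacian. The logarithm is then exactly annihilated, and the inverse-radius correction supplies a strictly positive term of order $a^2\rho^{-3}$. I will show that the quasilinear error is also of order $a^2\rho^{-3}$, but with a constant independent of $C_*$, so it can be absorbed by choosing $C_*$ large and then $a_0$ small.

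\emph{Step 1 (boundary and sign).} On $\{\rho=1\}$ both terms of \eqref{old-eq:barrier} vanish, which gives \eqref{old-eq:barrier-boundary}. For $\rho>1$ we have $-a\log\rho<0$ and $C_*a^2(\rho^{-1}-1)<0$, so $b<0$. As $\rho\to\infty$ the logarithm dominates and $b\to-\infty$, which gives \eqref{old-eq:barrier-proper}.

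\emph{Step 2 (the linear part).} For a radial function $f(\rho)$ we have $\operatorname{tr}(A\,D_x^2 f)=\Delta_y f(|y|)=f''+f'/\rho$ in two dimensions. This gives
\[
 \operatorname{tr}(A\,D^2\log\rho)=0,
 \qquad
 \operatorname{tr}(A\,D^2\rho^{-1})=2\rho^{-3}-\rho^{-3}=\rho^{-3}.
\]
Hence
\[
 \operatorname{tr}(A\,D^2 b)=C_*a^2\rho^{-3}.
\]

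\emph{Step 3 (the quasilinear error).} Since $\Phi\in C^3$ with the bounds \eqref{old-eq:regularity-bounds}, $\varphi$ is $C^3$ near $0$. Therefore $|D^2\varphi(p)-A|\le C_1|p|$ for $|p|\le1$, with $C_1=C_1(\Phi)$.

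Because $\rho$ is comparable to $|x|$ with constants depending only on $A$, the first- and second-order derivatives of the two terms of $b$ satisfy
\[
 |Db|\le C_2\bigl(a\rho^{-1}+C_*a^2\rho^{-2}\bigr),
 \qquad
 |D^2b|\le C_2\bigl(a\rho^{-2}+C_*a^2\rho^{-3}\bigr),
\]
with $C_2=C_2(A)$. Using $\rho\ge1$, we get
\[
 \bigl|\Phiop[b]-\operatorname{tr}(A\,D^2b)\bigr|
 \le C_1|Db|\,|D^2b|
 \le C_1C_2^2\,a^2\rho^{-3}(1+C_*a)^2 .
\]

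\emph{Step 4 (choice of constants).} Set $C_*:=8C_1C_2^2$. Then choose $a_0$ so that $C_*a_0\le1$ and so that $C_2(a_0+C_*a_0^2)$ is below any prescribed smallness; this also keeps $|Db|\le1$, which Step 3 requires.

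For $a<a_0$ the error is at most $4C_1C_2^2a^2\rho^{-3}=\tfrac12C_*a^2\rho^{-3}$. Combined with Step 2, this gives $\Phiop[b]\ge\tfrac12C_*a^2\rho^{-3}$, which is \eqref{old-eq:barrier-strict} with $c_*:=C_*/2$. The bound on $|Db|$ from Step 3 gives the final assertion that $|Db|$ is uniformly small.

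\emph{Main obstacle.} The delicate point is the bookkeeping in Step 3. The cross term $a\rho^{-1}\cdot a\rho^{-2}$ is of exactly the same order $a^2\rho^{-3}$ as the gain, so the barrier works only because the constant in front of that cross term does not depend on $C_*$. The $C_*$-dependent pieces of the error carry an extra factor $C_*a$, and these are controlled by taking $a_0$ small.
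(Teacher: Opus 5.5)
Your proposal is correct and follows essentially the same route as the paper. You use the identities $A:D^2\log\rho=0$ and $A:D^2(\rho^{-1})=\rho^{-3}$, which you check via the radial Laplacian in $y=A^{-1/2}x$, a form the paper also records. The remaining steps also match: the decay bounds on $Db$ and $D^2b$, the Lipschitz estimate $\|D^2\varphi(Db)-A\|\lesssim|Db|$, and fixing $C_*$ above the $C_*$-independent cross-term constant before shrinking $a_0$. Your factor $(1+C_*a)^2$ is just a compact form of the paper's $K_0+K_1C_*a+K_2C_*^2a^2$.
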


\begin{proof}
For matrices $M$ and $N$, write
\[
 M:N:=\operatorname{tr}(M^TN).
\]
Set $B:=A^{-1}$, $r:=\rho(x)$, and $q:=Bx$.  Since $r^2=x^TBx$, direct
differentiation gives
\begin{align}
 D\log r&=\frac{q}{r^2},&
 D^2\log r&=\frac{B}{r^2}-2\frac{q\otimes q}{r^4},
 \label{old-eq:log-derivatives}\\
 D(r^{-1})&=-\frac{q}{r^3},&
 D^2(r^{-1})&=-\frac{B}{r^3}+3\frac{q\otimes q}{r^5}.
 \label{old-eq:inverse-radius-derivatives}
\end{align}
Moreover, $A:B=2$ and $q^TAq=r^2$.  Contracting
\eqref{old-eq:log-derivatives} and
\eqref{old-eq:inverse-radius-derivatives} with $A$ therefore yields
\begin{equation}\label{old-eq:linear-barrier-identities}
 A:D^2\log\rho=0,
 \qquad
 A:D^2(\rho^{-1})=\rho^{-3}.
\end{equation}
Equivalently, after the linear change of variables $y=A^{-1/2}x$, these
identities become
\[
 \Delta_y\log|y|=0,
 \qquad
 \Delta_y|y|^{-1}=|y|^{-3}
\]
in two dimensions.

The eigenvalues of $A$ are bounded above and below in terms of the
ellipticity data.  Hence $\rho$ is comparable to the Euclidean radius, and
differentiating the preceding formulas gives, for $k=1,2$,
\begin{equation}\label{old-eq:radial-derivative-decay}
 |D^k\log\rho|\le C_k\rho^{-k},
 \qquad
 |D^k(\rho^{-1})|\le C_k\rho^{-k-1}.
\end{equation}
Consequently,
\begin{align}
 |Db|&\le C\bigl(a\rho^{-1}+C_*a^2\rho^{-2}\bigr),
 \label{old-eq:Db-bound}\\
 |D^2b|&\le C\bigl(a\rho^{-2}+C_*a^2\rho^{-3}\bigr).
 \label{old-eq:D2b-bound}
\end{align}

Choose $r_0>0$ so that $D^2\varphi$ is Lipschitz on
$\overline{B_{r_0}(0)}$.  Once $a_0$ is sufficiently small,
\eqref{old-eq:Db-bound} ensures that $|Db|\le r_0$ on $\{\rho\ge1\}$.
Thus, for some $L=L(\Phi)$,
\begin{equation}\label{old-eq:Hessian-Taylor-bound}
 \|D^2\varphi(Db)-A\|\le L|Db|.
\end{equation}
Here and below $\|\cdot\|$ is the operator norm.  Since
\eqref{old-eq:linear-barrier-identities} gives
$A:D^2b=C_*a^2\rho^{-3}$, we obtain
\begin{align}
 \Phiop[b]
 &=C_*a^2\rho^{-3}
   +\bigl(D^2\varphi(Db)-A\bigr):D^2b \notag\\
 &\ge
 \bigl(C_*-K_0-K_1C_*a-K_2C_*^2a^2\bigr)a^2\rho^{-3},
 \label{old-eq:quantitative-barrier-error}
\end{align}
where $K_0,K_1,K_2$ depend only on $\Phi$.  Indeed,
\eqref{old-eq:Db-bound}--\eqref{old-eq:D2b-bound} imply
\[
 |Db|\,|D^2b|
 \le
 \bigl(K_0a^2+K_1C_*a^3+K_2C_*^2a^4\bigr)\rho^{-3},
\]
where we used $\rho\ge1$.

Now fix $C_*:=2(K_0+1)$ and then decrease $a_0$ so that
\[
 K_1C_*a_0+K_2C_*^2a_0^2\le1
\]
and the small-gradient condition above also holds.  It follows from
\eqref{old-eq:quantitative-barrier-error} that
\[
 \Phiop[b]\ge \frac{C_*}{2}a^2\rho^{-3},
\]
so we may take $c_*=C_*/2$.

Finally, if $\rho>1$, then both $-a\log\rho$ and
$C_*a^2(\rho^{-1}-1)$ are negative.  This proves
\eqref{old-eq:barrier-boundary}--\eqref{old-eq:barrier-proper}, while the
uniform smallness of $Db$ follows from \eqref{old-eq:Db-bound}.
\end{proof}

For $t>0$, define
\begin{equation}\label{old-eq:scaled-barrier}
  b_t(x):=t\,b(x/t),
  \qquad \rho(x)\ge t.
\end{equation}

\begin{lemma}[Scaled barrier convergence]
\label{old-lem:barrier-scaling}
The scaled barriers satisfy
\begin{equation}\label{old-eq:scaled-strictness}
  \Phiop[b_t](x)=t^{-1}\Phiop[b](x/t)>0,
\end{equation}
and
\begin{equation}\label{old-eq:punctured-plane-convergence}
  b_t\to0
  \quad\text{in }C^2_{\mathrm{loc}}(\R^2\setminus\{0\})
  \quad\text{as }t\downarrow0.
\end{equation}
For each fixed $t$, the height function $b_t$ is proper and tends to
$-\infty$ along its end.
\end{lemma}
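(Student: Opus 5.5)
The plan is to compute directly from the definition of $b_t$ by the chain rule, and then use the bounds already obtained in the proof of \cref{old-lem:strict-barrier}.

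\emph{Scaling identity.} Since $b_t(x)=t\,b(x/t)$, we have
\[
 Db_t(x)=Db(x/t),
 \qquad
 D^2b_t(x)=t^{-1}D^2b(x/t).
\]
The operator $\Phiop[u]=\operatorname{tr}(D^2\varphi(Du)D^2u)$ depends on $Du$ only through the coefficient matrix and is linear in $D^2u$. Hence
\[
 \Phiop[b_t](x)=t^{-1}\operatorname{tr}\bigl(D^2\varphi(Db(x/t))D^2b(x/t)\bigr)=t^{-1}\Phiop[b](x/t).
\]
On the domain $\rho(x)>t$ we have $\rho(x/t)=\rho(x)/t>1$, because $\rho$ is positively one-homogeneous. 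So \eqref{old-eq:barrier-strict} gives strict positivity, which proves \eqref{old-eq:scaled-strictness}.

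\emph{Local convergence.} Fix a compact set $K\subset\R^2\setminus\{0\}$. Since $\rho$ is comparable to $|x|$, there are constants $0<m\le M$ with $m\le\rho\le M$ on $K$, and for $t<m$ the set $K$ lies in the domain of $b_t$. Writing $\rho(x/t)=\rho(x)/t$, we get
\[
 b_t(x)=-at\log\rho(x)+at\log t+C_*a^2\bigl(\rho(x)^{-1}t^2-t\bigr).
\]
Each term tends to $0$ uniformly on $K$ as $t\downarrow0$; in particular $t\log t\to0$.

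For the derivatives, \eqref{old-eq:Db-bound} and \eqref{old-eq:D2b-bound} give
\[
 |Db_t(x)|=|Db(x/t)|\le C\bigl(at\rho^{-1}+C_*a^2t^2\rho^{-2}\bigr)\to0,
\]
\[
 |D^2b_t(x)|\le t^{-1}C\bigl(at^2\rho^{-2}+C_*a^2t^3\rho^{-3}\bigr)\to0,
\]
uniformly on $K$. This proves \eqref{old-eq:punctured-plane-convergence}.

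\emph{Properness and behaviour at infinity.} For fixed $t$, $b_t$ is continuous on the closed set $\{\rho\ge t\}$. By \eqref{old-eq:barrier-proper}, $b_t(x)=t\,b(x/t)\to-\infty$ as $\rho(x)\to\infty$, which is the same as $|x|\to\infty$. Sublevel-type preimages $b_t^{-1}([-N,0])$ are closed and, by this limit, bounded, hence compact. Therefore the height function is proper and tends to $-\infty$ along the end.

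There is no genuine obstacle in this argument. The only point needing care is the additive $at\log t$ term produced by the logarithm under rescaling, and it vanishes in the limit.
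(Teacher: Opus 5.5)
Your proposal is correct and follows the paper's proof essentially step for step: the chain-rule scaling identity, the explicit expansion of $b_t$ in which the $t\log t$ term vanishes, and the behaviour as $\rho\to\infty$. The differences are cosmetic: you bound the derivatives by rescaling \eqref{old-eq:Db-bound}--\eqref{old-eq:D2b-bound} rather than differentiating the expansion, and you cite \eqref{old-eq:barrier-proper} for the limit, whereas the paper checks strict monotonicity of $b_t$ in $\rho$ directly.
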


\begin{proof}
The elliptic radius is positively one-homogeneous, so
\[
 \rho(x/t)=t^{-1}\rho(x).
\]
Thus $b(x/t)$ is defined precisely when $\rho(x)\ge t$; this is the reason
for the domain in \eqref{old-eq:scaled-barrier}.  Differentiating gives
\[
 Db_t(x)=Db(x/t),
 \qquad
 D^2b_t(x)=t^{-1}D^2b(x/t).
\]
Therefore
\[
 \Phiop[b_t](x)
 =t^{-1}\Phiop[b](x/t)
 \ge c_*a^2t^2\rho(x)^{-3}>0.
\]

For the convergence assertion, write explicitly
\begin{equation}\label{old-eq:scaled-barrier-expansion}
 b_t(x)
 =-at\log\frac{\rho(x)}{t}
   +C_*a^2\left(\frac{t^2}{\rho(x)}-t\right).
\end{equation}
On a compact set $K\subset\subset\R^2\setminus\{0\}$, the functions $\rho$,
$\rho^{-1}$, and their derivatives are uniformly bounded.  Since
$t|\log t|\to0$, \eqref{old-eq:scaled-barrier-expansion} and its first two
derivatives converge uniformly to zero on $K$.  This proves
\eqref{old-eq:punctured-plane-convergence}.

For fixed $t$, the right-hand side of
\eqref{old-eq:scaled-barrier-expansion} is strictly decreasing as a function
of $\rho$, since
\[
 \frac{d}{d\rho}b_t
 =-\frac{at}{\rho}-\frac{C_*a^2t^2}{\rho^2}<0,
\]
and it tends to $-\infty$ as $\rho\to\infty$.  This proves the last
assertion.
\end{proof}

The geometry summarized in \cref{old-fig:exterior-barrier} is exactly what is
used below.  In the left panel, the artificial inner boundary
$\{\rho=t\}$ shrinks to the origin, the graph becomes flat on compact subsets
away from it, and the exterior end remains vertically proper.  The right
panel anticipates the fixed disk $\mathcal D=\{\rho\le R_0\}$ and the vertical
translate $w=b_t+\delta_t$ introduced in the proof below.  The level sets
$\{\rho=t\}$ and $\{\rho=R_0\}$ are concentric.  Once $t$ is fixed, the
passage from $b_t$ to $w$ changes only the height of the barrier.

\begin{figure}[htbp]
  \centering
  \includegraphics[width=.98\textwidth]{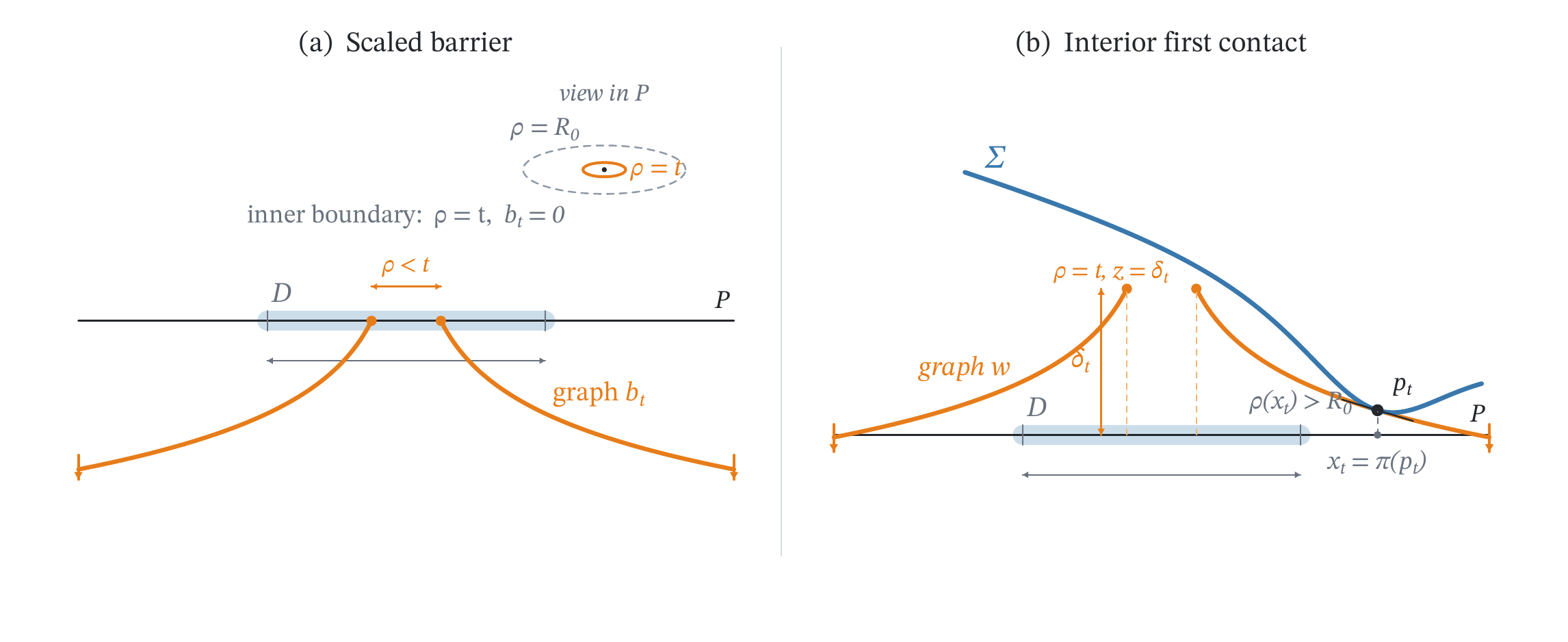}
  \caption{Scaled logarithmic barrier and interior first contact.}
  \label{old-fig:exterior-barrier}
\end{figure}
\FloatBarrier

\section{A direct exterior-barrier proof of the two-dimensional statement}
\label{old-sec:proper-proof}

In this section, we complete the direct proof by using the strict exterior
graphs constructed above to turn a hypothetical nonplanar surface into an
interior first-contact contradiction.
Properness first gives
a positive separation from the supporting plane over a fixed disk and then
makes the difference between the surface height and a translated barrier
coercive.  Its negative minimum therefore occurs at an interior point, where
stationarity is incompatible with the strict barrier inequality.

\begin{proof}[Proof of \cref{old-thm:proper-halfspace}]
Apply an orthogonal map $Q$ to the coordinates and replace the integrand by
$\Phi_Q(\xi):=\Phi(Q^{-1}\xi)$.  Relabel the transformed surface and
integrand as $\Sigma$ and $\Phi$, and then translate.  Write points of
$\R^3$ as $(x,z)\in\R^2\times\R$, set $P:=\{z=0\}$, and identify $P$ with
$\R^2$.  Suppose that
\begin{equation}\label{old-eq:surface-normalization}
 \Sigma\subset\{z\ge0\},\qquad \inf_\Sigma z=0.
\end{equation}
If $\Sigma$ meets $P$, the strong maximum principle makes the contact
set open in $\Sigma$ \cite{SolomonWhite1989}; it is closed as well, so
connectedness gives $\Sigma\subset P$.  A surface without boundary is locally
open in $P$, while properness makes it closed in $P$; hence $\Sigma=P$.
Thus a nonplanar counterexample would satisfy $\Sigma\subset\{z>0\}$.

Let $\pi(x,z):=x$ be the horizontal projection onto the identified plane
$P\simeq\R^2$.  Choose $R_0>0$ and let
$\mathcal D:=\{x\in P:\rho(x)\le R_0\}$.  The set
\[
 \Sigma\cap\pi^{-1}(\mathcal D)\cap\{0\le z\le1\}
\]
is compact.  Since it is disjoint from $P$, the height has a positive minimum
there; after taking the minimum with $1$, we obtain $m_0>0$ such that
\begin{equation}\label{old-eq:positive-height-over-disk}
 z(p)\ge m_0\qquad\text{whenever }p\in\Sigma\text{ and }\pi(p)\in\mathcal D.
\end{equation}
If the displayed compact set is empty, take $m_0=1$.

Choose $\varepsilon\in(0,m_0/4)$ and a point $q\in\Sigma$ with
$z(q)<\varepsilon$.  Then $\pi(q)\notin\mathcal D$.  By
\eqref{old-eq:punctured-plane-convergence}, for all sufficiently small $t$ the
artificial inner boundary is contained in $\mathcal D$ and
\begin{equation}\label{old-eq:negative-test-point}
  b_t(\pi(q))>z(q)-\varepsilon.
\end{equation}
On the closed exterior portion
\begin{equation}\label{old-eq:exterior-preimage}
  \Sigma_t:=\{p\in\Sigma:\rho(\pi(p))\ge t\},
\end{equation}
define
\begin{equation}\label{old-eq:coercive-contact-function}
  f_t(p):=z(p)-\varepsilon-b_t(\pi(p)).
\end{equation}
On the artificial boundary $\{\rho=t\}$, one has $b_t=0$ and
$f_t\ge m_0-\varepsilon>0$, whereas \eqref{old-eq:negative-test-point} gives
$f_t(q)<0$.

Every sublevel set $\{p\in\Sigma_t:f_t(p)\le c\}$, $c\in\R$, is compact.
It is empty if $c+\varepsilon<0$.  Otherwise, if $f_t(p)\le c$, then
\[
 0\le z(p)\le c+\varepsilon,
 \qquad
 0\le-b_t(\pi(p))\le c+\varepsilon.
\]
Since $-b_t(x)\to+\infty$ strictly as $\rho(x)\to\infty$, the second
inequality bounds $\rho(\pi(p))$.  Thus the sublevel is a closed subset of
the intersection of $\Sigma$ with a compact ambient cylinder, and is compact
by properness.  Hence $f_t$ attains a negative minimum at an interior point
$p_t\in\Sigma_t$.
Equivalently, the minimum chooses the vertical placement shown in the right
panel of \cref{old-fig:exterior-barrier}: neither the artificial inner
boundary nor spatial infinity can be a contact location.

Set $x_t:=\pi(p_t)$ and
\begin{equation}\label{old-eq:touching-vertical-shift}
 \delta_t:=\varepsilon+f_t(p_t)
 =z(p_t)-b_t(x_t),
 \qquad
 w(x):=b_t(x)+\delta_t.
\end{equation}
Since $f_t(p_t)<0$, one has $\delta_t<\varepsilon$.  On the artificial
boundary $\{\rho=t\}$ we have $f_t>0$, whereas $f_t(p_t)<0$; hence
$\rho(x_t)>t$ and $b_t(x_t)<0$.  Since $z(p_t)>0$, it follows that
$\delta_t>0$.  Thus the vertical translation lifts the artificial inner
boundary to $\{\rho=t,z=\delta_t\}$ without changing its horizontal
footprint.  Moreover, every $p\in\Sigma$ with
$t\le\rho(\pi(p))\le R_0$ satisfies
$w(\pi(p))\le\delta_t<\varepsilon<m_0\le z(p)$.  Consequently
$x_t\notin\mathcal D$, so $\rho(x_t)>R_0$ and the contact occurs on the smooth
descending exterior part of the barrier, as shown in the right panel of
\cref{old-fig:exterior-barrier}.
Since $\rho(x_t)>R_0>t$, the barrier is smooth in a neighborhood of $x_t$.
Consider the ambient function
\[
 F_t(x,z):=z-\varepsilon-b_t(x).
\]
The restriction of $F_t$ to $\Sigma$ has a local minimum at $p_t$.
Consequently,
\[
 \nabla F_t(p_t)=(-Db_t(x_t),1)
\]
is perpendicular to $T_{p_t}\Sigma$.  Its vertical component is nonzero,
so the horizontal projection restricts to a local diffeomorphism on
$\Sigma$ near $p_t$.  We may therefore write this branch as $z=v(x)$ near
$x_t$.

Because $\Phi$ is even, reversing the chosen orientation of this local
branch does not change its stationarity equation.  We may hence use the
upward normal and conclude from \eqref{old-eq:graph-EL} that
\[
 \Phiop[v](x_t)=0.
\]
The definition of $\delta_t$ and the minimality of $f_t(p_t)$ give, for $x$
near $x_t$,
\[
 v(x)-w(x)=f_t((x,v(x)))-f_t(p_t)\ge0.
\]
Thus
\[
 v(x_t)=w(x_t),\qquad
 Dv(x_t)=Dw(x_t),\qquad
 D^2v(x_t)-D^2w(x_t)\ge0.
\]
Set
\[
 B_t:=D^2\varphi(Dw(x_t))=D^2\varphi(Dv(x_t)).
\]
The small-gradient choice in \cref{old-lem:strict-barrier}, together with
the ellipticity of $\Phi$, makes $B_t$ positive definite.  Therefore
\[
 \Phiop[w](x_t)-\Phiop[v](x_t)
 =B_t:\bigl(D^2w(x_t)-D^2v(x_t)\bigr)\le0.
\]
A vertical translation does not change the derivatives of the barrier, so
\[
 \Phiop[w](x_t)=\Phiop[b_t](x_t)>0.
\]
We have obtained
\begin{equation}\label{old-eq:first-contact-contradiction}
 0<\Phiop[w](x_t)\le\Phiop[v](x_t)=0,
\end{equation}
which is the desired contradiction.
\end{proof}

\begin{remark}\label{old-rem:non-even-2d}
For a non-even integrand, the operator at a contact point depends on the actual
orientation of the touching branch.  An extension to a properly embedded
separating boundary would require a proof that the phase fixes this
orientation throughout the contact construction, using $\Phi^-$ when the
orientation is reversed.  We do not claim that extension without such an
additional lemma.
\end{remark}

\begin{remark}
Theorem \ref{old-thm:proper-halfspace} is a one-surface halfspace theorem.  A
two-surface strong halfspace theorem additionally requires a separation
argument and is not claimed here.
\end{remark}

\section{Set minimizers and absolutely minimizing boundaries}
\label{old-sec:current-minimality}

In this section, we prove that a global set minimizer has an absolutely
minimizing oriented boundary.  The mass and orientation conventions were fixed
in \cref{old-sec:preliminaries}.  The point is that an arbitrary compactly
supported cycle perturbation need not itself be the boundary of a
Caccioppoli-set competitor.  Subadditivity, monotone integer-$BV$ truncation,
and codimension-one filling remove this obstruction.

\begin{lemma}[Subadditivity and orientation reversal]
\label{old-lem:asymmetric-gauge}
Let $\mu$ and $\eta$ be vector-valued Radon measures.  Then, as Radon
measures,
\begin{equation}\label{old-eq:gauge-subadditivity}
 \mathcal F_\Phi(\mu+\eta;\cdot)
 \le\mathcal F_\Phi(\mu;\cdot)+\mathcal F_\Phi(\eta;\cdot).
\end{equation}
Consequently, for any locally integral $n$-currents $T$ and $S$,
\begin{equation}\label{old-eq:anisotropic-current-triangle}
 \mu_{T+S}^\Phi\le\mu_T^\Phi+\mu_S^\Phi.
\end{equation}
Moreover, for $\Phi^-$ defined in
\eqref{old-eq:reversed-integrand-definition},
\begin{equation}\label{old-eq:reversed-current-mass}
 \mathcal F_\Phi(-\mu;\cdot)=\mathcal F_{\Phi^-}(\mu;\cdot),
 \qquad
 \mu_{-T}^\Phi=\mu_T^{\Phi^-},
 \qquad
 \Mphi(-T_E;A)=P_{\Phi^-}(E;A)
\end{equation}
for every Borel set $A$.
No evenness of $\Phi$ is required.
\end{lemma}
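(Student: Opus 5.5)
The plan is to reduce everything to pointwise facts about $\Phi$ and to one common dominating measure. For the subadditivity \eqref{old-eq:gauge-subadditivity}, fix a nonnegative Radon measure $\sigma$ with $|\mu|,|\eta|\ll\sigma$; for instance $\sigma:=|\mu|+|\eta|$. Write $\mu=f\sigma$ and $\eta=g\sigma$ with $f,g\in L^1_{\mathrm{loc}}(\sigma;\R^{n+1})$. Then $\mu+\eta=(f+g)\sigma$.

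The key observation is that, by positive one-homogeneity and $\Phi(0)=0$, the gauge in \eqref{old-eq:vector-gauge} can be computed with respect to any dominating measure:
\[
 \mathcal F_\Phi(\mu;A)=\int_A\Phi(f)\,d\sigma .
\]
To see this, note that $|\mu|=|f|\sigma$ and $d\mu/d|\mu|=f/|f|$ holds $|\mu|$-a.e. Hence
\[
 \int_A\Phi\bigl(f/|f|\bigr)|f|\,d\sigma=\int_A\Phi(f)\,d\sigma .
\]
On the set $\{f=0\}$, both sides vanish because $\Phi(0)=0$. I would record this representation once, since everything else follows from it.

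Convexity and one-homogeneity give the pointwise subadditivity
\[
 \Phi(f+g)=2\Phi\bigl(\tfrac{f+g}{2}\bigr)\le\Phi(f)+\Phi(g).
\]
Integrating this over an arbitrary Borel set $A$ against $\sigma$ yields \eqref{old-eq:gauge-subadditivity} as an inequality of measures.

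The current version \eqref{old-eq:anisotropic-current-triangle} then follows by applying this with $\mu=\mathbf n(T)$ and $\eta=\mathbf n(S)$. Here I use the additivity $\mathbf n(T+S)=\mathbf n(T)+\mathbf n(S)$ recorded in \cref{old-sec:preliminaries}, together with the definition \eqref{old-eq:oriented-gauge-mass}.

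For the orientation reversal, observe that $|-\mu|=|\mu|$ and $d(-\mu)/d|\mu|=-d\mu/d|\mu|$. Therefore
\[
 \mathcal F_\Phi(-\mu;A)=\int_A\Phi\bigl(-\tfrac{d\mu}{d|\mu|}\bigr)\,d|\mu|=\mathcal F_{\Phi^-}(\mu;A).
\]
Combining this with $\mathbf n(-T)=-\mathbf n(T)$ gives $\mu^\Phi_{-T}=\mu^{\Phi^-}_T$.

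Finally, \eqref{old-eq:set-current-mass} holds for every admissible integrand, and in particular for $\Phi^-$, which satisfies the same structural assumptions. This gives
\[
 \Mphi(-T_E;A)=\mu^{\Phi^-}_{T_E}(A)=P_{\Phi^-}(E;A).
\]

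None of these steps uses evenness: only convexity, positive one-homogeneity, and $\Phi(0)=0$ enter. The only point needing care is the measure-theoretic one, namely that the polar densities of $\mu$, $\eta$ and $\mu+\eta$ live on different total-variation measures. This is why I pass to the common dominating measure $\sigma$ and use the homogeneity-based representation of $\mathcal F_\Phi$ there. I expect this to be the main, though mild, obstacle, and the representation lemma resolves it.
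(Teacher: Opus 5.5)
Your proposal is correct and follows the paper's proof: the common dominating measure $\sigma=|\mu|+|\eta|$, pointwise subadditivity $\Phi(a+b)\le\Phi(a)+\Phi(b)$ from convexity and one-homogeneity, integration over arbitrary Borel sets, and the reversal identities from $\Phi(-z)=\Phi^-(z)$. Your explicit check that $\mathcal F_\Phi$ can be computed against any dominating measure is a step the paper leaves implicit.
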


\begin{proof}
Take $\sigma=|\mu|+|\eta|$ and write
$a=d\mu/d\sigma$ and $b=d\eta/d\sigma$.  Convexity and positive
one-homogeneity imply
$\Phi(a+b)\le\Phi(a)+\Phi(b)$.  Integrating on an arbitrary Borel set proves
\eqref{old-eq:gauge-subadditivity}.  The reversal identities follow directly
from $\Phi(-z)=\Phi^-(z)$.
\end{proof}

\begin{lemma}[Monotone truncation]
\label{old-lem:monotone-truncation}
Let $U\subset\R^{n+1}$ be open, let
$v\in BV_{\mathrm{loc}}(U;\mathbb Z)$, and define
$\tau:\R\to[0,1]$ and $G\subset U$ by
\begin{equation}\label{old-eq:truncation}
 \tau(s):=\min\{1,\max\{0,s\}\},
 \qquad G:=\{v\ge1\}.
\end{equation}
Then $\tau\circ v=\chi_G$ almost everywhere.  There is a Borel function
$\vartheta:U\to[0,1]$ such that
\begin{equation}\label{old-eq:monotone-chain-rule}
 D(\tau\circ v)=\vartheta\,Dv.
\end{equation}
Consequently, as Radon measures,
\begin{equation}\label{old-eq:truncation-energy}
 \mu_{T_G}^\Phi
 \le\mu_{\partial\llbracketset{v}}^\Phi.
\end{equation}
\end{lemma}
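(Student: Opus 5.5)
The plan is to establish the three assertions in turn: the pointwise identity $\tau\circ v=\chi_G$, the chain rule \eqref{old-eq:monotone-chain-rule}, and then the energy inequality \eqref{old-eq:truncation-energy}, which will follow from the chain rule and positive homogeneity of $\Phi$.

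\emph{The pointwise identity.} Since $v$ is integer valued, $\tau(v(x))$ equals $0$ where $v\le0$ and $1$ where $v\ge1$. Hence $\tau\circ v=\chi_G$ almost everywhere.

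\emph{The chain rule.} I would use the one-dimensional layer (coarea) decomposition for integer $BV$ functions. For $v\in BV_{\mathrm{loc}}(U;\mathbb Z)$ and $E_k:=\{v\ge k\}$, one has $v-v_0=\sum_{k}(\chi_{E_k}-\chi_{\{v_0\ge k\}})$ locally, with the sum locally finite in $BV$ on compact sets. This gives
\[
 Dv=\sum_{k\in\mathbb Z}D\chi_{E_k},
 \qquad
 |Dv|=\sum_{k\in\mathbb Z}|D\chi_{E_k}|.
\]
The equality of total variations is the coarea formula. Because $E_{k+1}\subset E_k$, the reduced boundaries carry the same outer normal $\HH^n$-a.e. on their overlaps, so the vector measures $D\chi_{E_k}$ do not cancel. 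In particular each $D\chi_{E_k}$ is absolutely continuous with respect to $|Dv|$, with polar density equal to $d Dv/d|Dv|$ a.e. Writing $D\chi_{E_k}=\theta_k\,Dv$ with Borel $\theta_k\ge0$ and $\sum_k\theta_k=1$ $|Dv|$-a.e., I would take $\vartheta:=\theta_1$. Since $G=E_1$, this yields $D\chi_G=\vartheta\,Dv$ with $\vartheta\in[0,1]$.

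\emph{The energy inequality.} By \eqref{old-eq:normal-vector-measure}, $\mathbf n(T_G)=-D\chi_G=\vartheta\,\mathbf n(\partial\llbracketset{v})$. Its polar density coincides with that of $\mathbf n(\partial\llbracketset v)$ wherever $\vartheta>0$, and $|\mathbf n(T_G)|=\vartheta\,\|\partial\llbracketset v\|$. Hence \eqref{old-eq:vector-gauge} gives
\[
 \mu^\Phi_{T_G}=\vartheta\,\mu^\Phi_{\partial\llbracketset v}\le\mu^\Phi_{\partial\llbracketset v},
\]
using $\Phi\ge0$ and $\vartheta\le1$.

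The main obstacle is the no-cancellation claim: that the normals of nested level sets agree on common reduced boundary points. I would prove it as follows. At $\HH^n$-a.e. $x\in\partial^*E_k\cap\partial^*E_{k+1}$, blow-ups of both sets converge to halfspaces. The inclusion $E_{k+1}\subset E_k$ forces the halfspace for $E_{k+1}$ to lie inside the halfspace for $E_k$, and two halfspaces through the origin with one contained in the other must coincide, so the normals agree. Once the normals agree, $|\sum_k D\chi_{E_k}|=\sum_k|D\chi_{E_k}|$ follows immediately.

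Alternatively, I would invoke the general $BV$ chain rule for Lipschitz monotone $\tau$ (Ambrosio--Fusco--Pallara, Theorem 3.99). Since $v$ is integer valued, $Dv$ has no diffuse part, and on the jump set $J_v$ one gets
\[
 D(\tau\circ v)=\frac{\tau(v^+)-\tau(v^-)}{v^+-v^-}\,Dv\llcorner J_v,
\]
whose coefficient lies in $[0,1]$ because $\tau$ is nondecreasing and $1$-Lipschitz. This route gives $\vartheta$ directly.
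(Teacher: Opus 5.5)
Your proof is correct, and the route you list as an alternative is exactly the paper's. The paper applies the scalar $BV$ chain rule (Ambrosio--Fusco--Pallara, Theorem~3.99) to the nondecreasing $1$-Lipschitz map $\tau$. It reads off $\vartheta=(\tau(v^+)-\tau(v^-))/(v^+-v^-)\in[0,1]$ on the jump part, plus a nonnegative approximate derivative on the diffuse part; as you observe, that part vanishes for integer-valued $v$. It then concludes $\mu^\Phi_{T_G}=\vartheta\,\mu^\Phi_{\partial\llbracketset{v}}$ by positive one-homogeneity, as in your last step.

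Your primary route is genuinely different. It uses the integer structure through the layer decomposition $Dv=\sum_kD\chi_{E_k}$, $|Dv|=\sum_k|D\chi_{E_k}|$ with $E_k=\{v\ge k\}$, and takes $\vartheta$ to be the density of $D\chi_{E_1}$ with respect to $Dv$. This needs only the coarea formula rather than the general chain rule. It also identifies $\vartheta$ concretely: $|Dv|$-a.e.\ one has $\vartheta=1/|v^+-v^-|$ on $\partial^*G$ and $\vartheta=0$ elsewhere. In other words, $G$ carries one of the $|v^+-v^-|$ layers of the jump, which agrees with the paper's difference quotient. The paper's citation is shorter, and it does not use integrality for the identity $D(\tau\circ v)=\vartheta\,Dv$, only for $\tau\circ v=\chi_G$.

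There are two minor points in your write-up.

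\emph{No-cancellation.} Once you quote both halves of the coarea formula, this step is automatic rather than the main obstacle. Write $D\chi_{E_k}=f_k\sigma$ with $\sigma:=\sum_k|D\chi_{E_k}|=|Dv|$. The equality $|\sum_kf_k|=1=\sum_k|f_k|$ $\sigma$-a.e.\ then forces $f_k=|f_k|\,dDv/d|Dv|$. Your blow-up argument is a valid independent proof of the same fact. If you keep it, apply it to every nested pair $E_j\subset E_k$, not only consecutive ones.

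\emph{Summability.} An integer-valued $BV_{\mathrm{loc}}$ function may be unbounded on compact sets, so the layer series is locally summable rather than locally finite. Reading $v_0$ as an integer constant, $\sum_k|\chi_{E_k}-\chi_{\{v_0\ge k\}}|=|v-v_0|$ pointwise and $\sum_k|D\chi_{E_k}|(A)=|Dv|(A)$, which is all the argument needs.
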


\begin{proof}
Since $v$ is integer valued, $\tau\circ v=\chi_{\{v\ge1\}}$.  The scalar
$BV$ chain rule \cite[Theorem~3.99]{AmbrosioFuscoPallara2000}, applied to the
nondecreasing $1$-Lipschitz map $\tau$, gives
\eqref{old-eq:monotone-chain-rule}.  The coefficient $\vartheta$ is
nonnegative: on the jump part it is the difference quotient
\[
 \vartheta
 =\frac{\tau(v^+)-\tau(v^-)}{v^+-v^-}
\]
when $v^+\ne v^-$, and on the diffuse part it is the corresponding
nonnegative approximate derivative.  Thus the polar direction is never
reversed.  More explicitly,
\[
 \mathbf n(T_G)=-D(\tau\circ v)
 =\vartheta(-Dv)
 =\vartheta\,\mathbf n(\partial\llbracketset{v}).
\]
Positive one-homogeneity now gives, as measures,
\[
 \mu_{T_G}^{\Phi}
 =\vartheta\,\mu_{\partial\llbracketset{v}}^{\Phi}
 \le\mu_{\partial\llbracketset{v}}^{\Phi},
\]
which is \eqref{old-eq:truncation-energy}.  This computation is valid without
assuming that $\Phi$ is even.
\end{proof}

\begin{lemma}[Codimension-one filling]
\label{old-lem:integer-filling}
Let $S$ be a compactly supported integral $n$-cycle in $\R^{n+1}$.  Then
there exists $g\in BV(\R^{n+1};\mathbb Z)$ with compact essential support
such that
\begin{equation}\label{old-eq:integer-BV-filling}
 S=\partial\llbracketset{g},
 \qquad
 \supp g\subset\operatorname{conv}(\supp S).
\end{equation}
Here $\supp g$ denotes the essential support of $g$.
The normalization that $g=0$ on the unbounded component of
$\R^{n+1}\setminus\supp S$ determines $g$ uniquely.
\end{lemma}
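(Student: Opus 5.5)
The plan is to construct $g$ by integrating a winding-number (linking) function, and then to confirm the boundary identity through the constancy theorem.

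For $x\notin\supp S$, define $g(x)$ as the intersection number of $S$ with a ray starting at $x$. Equivalently, set $g:=\llbracketset{\cdot}$-density of the cone current $x\mathbin{\times}S$, or more cleanly take $R:=0\mathbin{\times}S$ to be the cone over $S$ with vertex a fixed point. Since $\partial S=0$, the homotopy formula gives $\partial R=S$. The current $R$ is an integral $(n+1)$-current in $\R^{n+1}$ with compact support contained in $\operatorname{conv}(\{0\}\cup\supp S)$. We will choose the vertex $0$ inside $\operatorname{conv}(\supp S)$, or any point of $\supp S$; the case $S=0$ is trivial with $g=0$.

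By the standard structure of top-dimensional integral currents (Federer 4.5.x, or the fact that integral $(n+1)$-currents in $\R^{n+1}$ are exactly $\llbracketset{g}$ with $g\in BV(\R^{n+1};\mathbb Z)$), we may write $R=\llbracketset{g}$ with $g$ integer valued, of bounded variation, and compactly supported. Then $S=\partial R=\partial\llbracketset{g}$. For the support bound, note that on $\R^{n+1}\setminus\supp S$ we have $\partial\llbracketset{g}=0$. The constancy theorem therefore makes $g$ a.e. constant on each connected component of $\R^{n+1}\setminus\supp S$. The complement of $\operatorname{conv}(\supp S)$ is connected (for $n+1\ge2$) and unbounded, so it lies in the unbounded component. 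There $g$ vanishes because $g$ has compact support. Hence $\supp g\subset\operatorname{conv}(\supp S)$, using that $\supp S$ itself is Lebesgue-null, since an $n$-rectifiable set has zero $(n+1)$-measure.

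For uniqueness, suppose $g_1,g_2$ are two such fillings. Then $\partial\llbracketset{g_1-g_2}=0$, so $D(g_1-g_2)=0$ and $g_1-g_2$ is a.e. constant on all of $\R^{n+1}$. The normalization on the unbounded component forces this constant to be zero.

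The only delicate point is the identification of the cone $R$ as $\llbracketset{g}$ with $g\in BV$ and integer valued. If this is not quoted directly, I would argue as follows. The closure theorem gives that $R$ is integral. Every normal top-dimensional current has the form $\llbracketset{g}$ with $g\in BV$, since the mass bound on $\partial R$ is exactly a total-variation bound on $Dg$. Integrality of the multiplicity then makes $g$ integer valued a.e.
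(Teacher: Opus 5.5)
Your proposal is correct and is essentially the paper's proof: the cone $R=p*S$ with vertex $p\in\operatorname{conv}(\supp S)$, the representation $R=\llbracketset{g}$ of this top-dimensional integral current with $g\in BV$ because $\|\partial R\|=|Dg|$, and uniqueness by the constancy theorem. The only deviation is that you get $\supp g\subset\operatorname{conv}(\supp S)$ from constancy on the unbounded component of $\R^{n+1}\setminus\supp S$, whereas the paper reads it off directly from $\supp R\subset\operatorname{conv}(\supp S)$. Your aside that $\supp S$ is Lebesgue-null is false in general: for instance, $S=\sum_k\partial\llbracketset{B_{2^{-k}}(q_k)}$ with $\{q_k\}$ dense in $B_1$ is an integral cycle with $\overline{B_1}\subset\supp S$. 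Nothing in your argument uses it, since $g=0$ a.e.\ off $\operatorname{conv}(\supp S)$ already is the support inclusion.
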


\begin{proof}
The assertion is immediate if $S=0$.  Otherwise choose
$p\in\operatorname{conv}(\supp S)$ and let $R:=p*S$ denote the cone current
with vertex $p$ over $S$.  Since $\partial S=0$, the cone formula gives
$\partial R=S$, while
$\supp R\subset\operatorname{conv}(\supp S)$.  Since $R$ is a compactly
supported integral current of top dimension in $\R^{n+1}$, the
representation theorem for top-dimensional integral currents gives an
integer-valued $g\in L^1(\R^{n+1})$ such that
$R=\llbracketset{g}$; see \cite[4.1.28]{Federer1969}.  Moreover,
$\partial R=S$ has finite mass and, under this representation,
$\|\partial R\|=|Dg|$.  Hence $g\in BV(\R^{n+1};\mathbb Z)$.  Since $R$
vanishes outside $\operatorname{conv}(\supp S)$, $g$ may be chosen to vanish
there, and thus has compact essential support.

If $g_1$ and $g_2$ are two such normalized potentials, then
$\partial\llbracketset{g_1-g_2}=0$.  The constancy theorem
\cite[4.1.31]{Federer1969} makes $g_1-g_2$ constant almost everywhere, and
the normalization forces that constant to be zero.
\end{proof}

\begin{proposition}[Absolute minimality of the boundary current]
\label{old-prop:set-to-current}
If $E$ is a global $\Phi$-perimeter minimizer, then $T_E$ is absolutely
$\Phi$-mass minimizing.  More precisely, if $W\subset\subset\R^{n+1}$ is open and
$S$ is an integral $n$-cycle with $\supp S\subset W$, then
\begin{equation}\label{old-eq:absolute-current-minimality}
  \Mphi(T_E;W)\le\Mphi(T_E+S;W).
\end{equation}
\end{proposition}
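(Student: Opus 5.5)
The plan is to convert the cycle perturbation into a set competitor, using the three lemmas just proved. Fix $W\subset\subset\R^{n+1}$ and an integral $n$-cycle $S$ with $\supp S\subset W$. By \cref{old-lem:integer-filling} write $S=\partial\llbracketset{g}$ with $g\in BV(\R^{n+1};\mathbb Z)$ of compact essential support. Then $T_E+S=\partial\llbracketset{v}$ with $v:=\chi_E+g\in BV_{\mathrm{loc}}(\R^{n+1};\mathbb Z)$. Set $G:=\{v\ge1\}$, the monotone truncation of \cref{old-lem:monotone-truncation}. That lemma gives the measure inequality $\mu^\Phi_{T_G}\le\mu^\Phi_{T_E+S}$, hence $P_\Phi(G;A)\le\Mphi(T_E+S;A)$ for every Borel $A$, by \eqref{old-eq:set-current-mass}. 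Since $g=0$ outside a compact set, $v=\chi_E$ there, so $G\triangle E$ is essentially bounded. It would then suffice to use $G$ as a competitor in \eqref{old-eq:global-minimizer}.

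The main obstacle is localization. The filling $g$ is supported in $\operatorname{conv}(\supp S)$, which need not lie in $W$; for instance, $W$ could be an annular region. So $E\triangle G$ need not be compactly contained in $W$.

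To handle this, I would first use uniqueness in \cref{old-lem:integer-filling} together with the constancy theorem. On each connected component of $\R^{n+1}\setminus\supp S$, the function $g$ is a.e. constant, because $Dg=-\mathbf n(S)$ vanishes there. Then I pass to a larger open set $W'\supset\supp g\cup W$, compactly contained in $\R^{n+1}$, and compare energies on $W'$ rather than $W$. Since $T_E+S=T_E$ on $W'\setminus\supp S$, it suffices to show
\[
\Mphi(T_E;W')\le\Mphi(T_E+S;W'),
\]
because the two sides differ from the corresponding quantities on $W$ by the same amount, namely the energy of $T_E$ on $W'\setminus W$, where $S$ vanishes.

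This reduction needs care on the region $W'\setminus W$ away from $\supp S$, where $g$ may be a nonzero constant $k$. There $v=\chi_E+k$, and truncation gives $G=\R^{n+1}$ (if $k\ge1$), $\varnothing$ (if $k\le-1$), or $E$ (if $k=0$). In the first two cases $P_\Phi(G)$ vanishes there, so the inequality $P_\Phi(G;\cdot)\le\mu^\Phi_{T_E+S}$ still holds as measures. The truncation lemma is stated globally as a measure inequality, so no case analysis is actually needed.

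With this in hand, choose an open $W'$ with $\overline{W'}$ compact and $\supp g\cup\overline W\subset W'$; then $E\triangle G\subset\supp g$ is compactly contained in $W'$. Global minimality on $W'$ gives
\[
P_\Phi(E;W')\le P_\Phi(G;W')\le\Mphi(T_E+S;W').
\]
Subtracting the common contribution $\Mphi(T_E;W'\setminus W)=\Mphi(T_E+S;W'\setminus W)$, which is finite since $\supp S\subset W$, yields \eqref{old-eq:absolute-current-minimality}. This uses \eqref{old-eq:set-current-mass} and the fact that $\mathbf n(T_E+S)=\mathbf n(T_E)$ off $\supp S$.

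No evenness of $\Phi$ is used anywhere. Orientation is preserved by the nonnegative factor $\vartheta$ in the truncation, and by the additivity of $\mathbf n$.
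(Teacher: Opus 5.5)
Your proposal is correct and is essentially the paper's proof. Both fill $S=\partial\llbracketset{g}$, truncate $v=\chi_E+g$ to $G=\{v\ge1\}$, and compare energies on a bounded open set containing both $\overline W$ and $\supp g$; the paper uses a ball $B$ with $\overline W\cup\operatorname{conv}(\supp S)\subset\subset B$, which plays the role of your $W'$. Both then cancel the common contribution of $T_E$ and $T_E+S$ outside $W$.

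The detour through constancy of $g$ on components of $\R^{n+1}\setminus\supp S$ and the case analysis in $k$ is unnecessary, as you yourself note. One small correction in your last step: the subtracted term is finite because $W'$ is bounded and $E$ has locally finite perimeter. The inclusion $\supp S\subset W$ is what makes the two subtracted terms equal, not what makes them finite.
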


\begin{proof}
By \cref{old-lem:integer-filling}, write
$S=\partial\llbracketset{g}$ with
$g\in BV(\R^{n+1};\mathbb Z)$ of compact essential support.  Choose a ball
$B$ such that
\[
 \overline W\cup\operatorname{conv}(\supp S)\subset\subset B.
\]
Then $\supp g\subset\subset B$.  Set
\begin{equation}\label{old-eq:integer-potential}
 v:=\chi_E+g,
 \qquad G:=\{v\ge1\}.
\end{equation}
Then $\partial\llbracketset{v}=T_E+S$, while $G=E$ outside $\supp g$.
Thus $G$ is an admissible set competitor.  By
\cref{old-lem:monotone-truncation},
\begin{equation}\label{old-eq:current-truncation}
 \Mphi(T_G;B)
 \le\Mphi(\partial\llbracketset{v};B)
 =\Mphi(T_E+S;B).
\end{equation}
Set minimality and \eqref{old-eq:set-current-mass} give
\[
 \Mphi(T_E;B)=P_\Phi(E;B)
 \le P_\Phi(G;B)=\Mphi(T_G;B).
\]
Combining the last two inequalities proves
\[
 \Mphi(T_E;B)\le\Mphi(T_E+S;B).
\]
Since $S=0$ on $B\setminus W$, the two currents, and hence their
$\Phi$-mass measures, agree there.  Subtracting this common finite
contribution yields \eqref{old-eq:absolute-current-minimality}, which is
precisely absolute minimality in the sense of
\eqref{old-eq:local-absolute-mass-minimality}.
\end{proof}

\begin{remark}
All masses above are localized to the bounded ball $B$.  No subtraction of
infinite global masses is involved.  The monotonicity in
\eqref{old-eq:monotone-chain-rule} is the point that preserves orientation for
a non-even integrand.
\end{remark}

\section{Plane peeling and the second blow-down}
\label{old-sec:peeling}

In this section, we prove the wall-rigidity proposition by isolating the phase
defect left behind by a minimizer with full trace on its supporting plane.  The trace
formula splits off the plane exactly, while the oriented minimality established
above shows that the defect minimizes the reversed integrand.  A simultaneous
blow-down preserves this measure splitting for both phases.  If the defect
survived, wall contact would give it full trace as well, producing an impossible
double contribution on the supporting plane.

\begin{lemma}[Wall-defect splitting]
\label{old-lem:wall-defect-splitting}
Let $H:=\{x_{n+1}>0\}$, let $P:=\partial H$, and let
$\nu_H=-e_{n+1}$ be the outer normal of $H$.  Suppose that $F\subset H$ has
locally finite perimeter and
\begin{equation}\label{old-eq:F-full-trace}
 \Tr_PF=1\qquad\HH^n\text{-a.e. on }P.
\end{equation}
Set $K:=H\setminus F$.  Then $\Tr_PK=0$,
\begin{equation}\label{old-eq:defect-no-wall-mass}
 |D\chi_K|(P)=0,
 \qquad T_F=T_H-T_K,
\end{equation}
and, as Radon measures on $\R^{n+1}$,
\begin{align}
 |D\chi_F|
 &=\HH^n\llcorner P+|D\chi_K|,
 \label{old-eq:perimeter-splitting}\\
 \mu_{T_F}^\Phi
 &=\Phi(\nu_H)\HH^n\llcorner P+\mu_{-T_K}^\Phi
 =\Phi(\nu_H)\HH^n\llcorner P+\mu_{T_K}^{\Phi^-}.
 \label{old-eq:mass-splitting}
\end{align}
\end{lemma}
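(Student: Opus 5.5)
We argue directly from BV structure. Since $F\subset H$ and $K=H\setminus F$, we have $\chi_K=\chi_H-\chi_F$ almost everywhere, so $K$ has locally finite perimeter and $D\chi_K=D\chi_H-D\chi_F$. Traces from inside $H$ are linear, and $\Tr_PH=1$, so $\Tr_PK=1-\Tr_PF=0$ $\HH^n$-a.e. on $P$. At the level of currents, $\llbracketset{K}=\llbracketset{H}-\llbracketset{F}$, and taking boundaries gives $T_K=T_H-T_F$, that is, $T_F=T_H-T_K$.

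The main step is to compute $D\chi_F\llcorner P$ and $D\chi_K\llcorner P$. For $u\in BV_{\mathrm{loc}}(\R^{n+1})$ vanishing on $\{x_{n+1}<0\}$, the Gauss--Green/trace formula for the halfspace gives
\[
 Du\llcorner P=-\Tr_P u\,\nu_H\,\HH^n\llcorner P
\]
with our orientation convention. Indeed, the exterior trace is $0$, so the jump across $P$ is the interior trace, and the sign matches $D\chi_H=-\nu_H\HH^n\llcorner P$. Applying this to $u=\chi_K$ gives $D\chi_K\llcorner P=0$, hence $|D\chi_K|(P)=0$. Applying it to $u=\chi_F$ gives $D\chi_F\llcorner P=-\nu_H\HH^n\llcorner P$. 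On the open set $\R^{n+1}\setminus P$, $\chi_H$ is locally constant, so $D\chi_F=-D\chi_K$ there, and $|D\chi_F|=|D\chi_K|$ off $P$. Since $|D\chi_K|$ does not charge $P$, adding the two pieces yields \eqref{old-eq:perimeter-splitting}. The only delicate point is citing the trace formula with the correct sign, for instance \cite[Theorem~3.84 and Theorem~3.87]{AmbrosioFuscoPallara2000} applied to the extension by zero; everything else is bookkeeping.

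For \eqref{old-eq:mass-splitting}, we use the normal measure $\mathbf n(T_F)=-D\chi_F$. By the computation above it decomposes into mutually singular parts: $\nu_H\HH^n\llcorner P$ on $P$, and $D\chi_K=\mathbf n(-T_K)$ off $P$, which carries all of $\mathbf n(-T_K)$ because $|D\chi_K|(P)=0$. For mutually singular vector measures, the polar density of the sum agrees with that of each summand on its carrier, so
\[
 \mathcal F_\Phi(\mu+\eta;\cdot)=\mathcal F_\Phi(\mu;\cdot)+\mathcal F_\Phi(\eta;\cdot),
\]
and we obtain $\mu_{T_F}^\Phi=\Phi(\nu_H)\HH^n\llcorner P+\mu_{-T_K}^\Phi$. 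Finally, \eqref{old-eq:reversed-current-mass} from \cref{old-lem:asymmetric-gauge} identifies $\mu_{-T_K}^\Phi=\mu_{T_K}^{\Phi^-}$.
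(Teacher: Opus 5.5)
Your proposal is correct and follows essentially the same route as the paper. Both arguments use the zero-extension trace formula to get $D\chi_K\llcorner P=0$, use $\chi_F=\chi_H-\chi_K$ to obtain $T_F=T_H-T_K$, and use the mutual singularity of the plane part and the defect part to derive both the Euclidean and the oriented gauge splittings; the only cosmetic difference is that you also apply the trace formula to $F$ to compute $D\chi_F\llcorner P$ directly, whereas the paper reads it off from $D\chi_F=D\chi_H-D\chi_K$.
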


\begin{proof}
For a finite-perimeter set $E_0\subset H$, extended by zero to $\R^{n+1}$,
the zero-extension trace formula is
\begin{equation}\label{old-eq:zero-extension-trace}
 D\chi_{E_0}\llcorner P
 =-(\Tr_PE_0)\nu_H\HH^n\llcorner P;
\end{equation}
this is the specialization to a halfspace of the trace identities
\cite[(2.10)--(2.14)]{DePhilippisMaggi2015}.  Since
$\chi_F+\chi_K=\chi_H$, the trace of $K$ is zero and
\eqref{old-eq:zero-extension-trace} gives $|D\chi_K|(P)=0$.
Differentiating the identity $\chi_F=\chi_H-\chi_K$ gives
$D\chi_F=D\chi_H-D\chi_K$, equivalently $T_F=T_H-T_K$.  The measure
$D\chi_H$ is concentrated on $P$, whereas $|D\chi_K|(P)=0$; hence the two
terms are mutually singular.  This proves the Euclidean splitting
\eqref{old-eq:perimeter-splitting}.  Applying the oriented gauge to the two
mutually singular normal measures proves \eqref{old-eq:mass-splitting}.
\end{proof}

The geometric content of \cref{old-lem:wall-defect-splitting} is shown in
\cref{old-fig:defect-splitting}.  The blue and orange arrows along
$\partial K$ indicate $\nu_K$ and $-\nu_K=\nu_F$, respectively, while the
black arrows indicate $\nu_H$ along $P$.  Thus the two boundary pieces in the
figure represent $T_H$ and $-T_K$ in the identity $T_F=T_H-T_K$.  The sign of
$-T_K$ is essential when $\Phi$ is not even.  No smoothness, connectedness,
or graphical structure is assumed for $K$.

\begin{figure}[htbp]
  \centering
  \includegraphics[width=.48\textwidth]{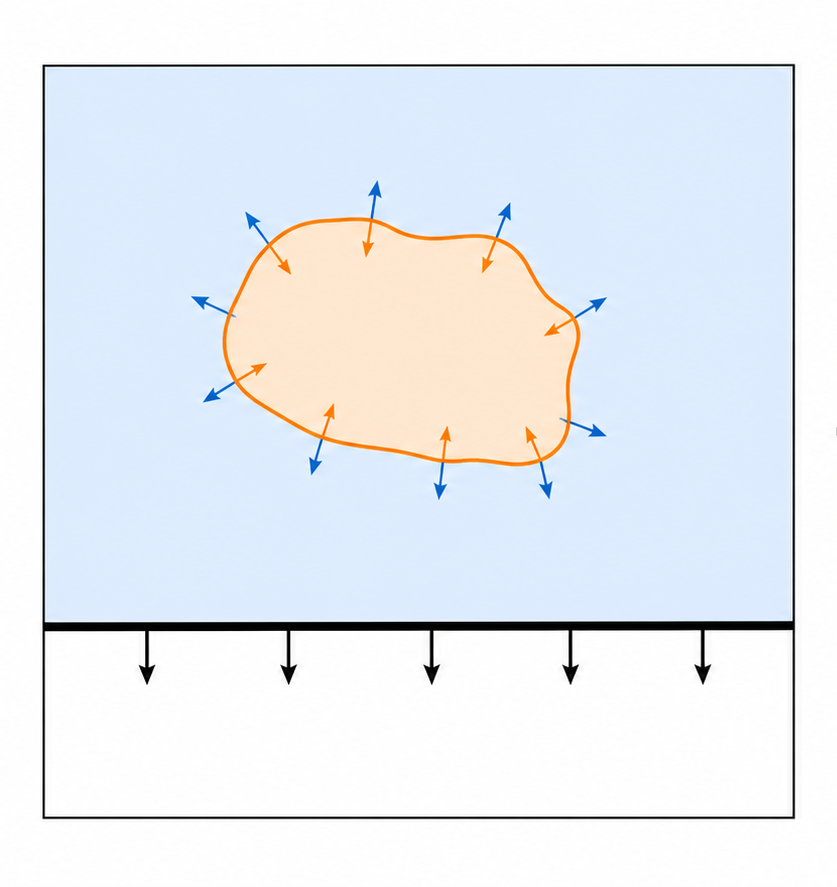}
  \caption{Plane peeling and the oriented defect.}
  \label{old-fig:defect-splitting}
\end{figure}
\FloatBarrier

\begin{proposition}[Minimality of the defect]
\label{old-prop:defect-minimality}
In the setting of \cref{old-lem:wall-defect-splitting}, suppose in addition
that $F$ is a global $\Phi$-perimeter minimizer.  Then $K$ is a global
$\Phi^-$-perimeter minimizer, and $-T_K$ is absolutely minimizing for
$\Mphi$.
\end{proposition}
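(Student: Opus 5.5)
The plan is to test $K$ against an arbitrary compactly supported set competitor $K'$, convert the competitor into a current perturbation of $T_F$, and then use \cref{old-prop:set-to-current} for $F$ together with the splitting \eqref{old-eq:mass-splitting}. The difficulty is that $K'$ need not lie in $H$, so $H\setminus K'$ is not directly a set competitor for $F$. Working at the level of currents avoids this.

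Fix an open $W\subset\subset\R^{n+1}$ and a set $K'$ of locally finite perimeter with $K\triangle K'\subset\subset W$. Put
\[
 S:=T_K-T_{K'}=\partial\llbracketset{\chi_K-\chi_{K'}}.
\]
This is a compactly supported integral $n$-cycle with $\supp S\subset W$ after shrinking to a slightly smaller open set if needed. Since $T_F=T_H-T_K$, we have $T_F+S=T_H-T_{K'}$. \cref{old-prop:set-to-current} applied to $F$ gives
\[
 \Mphi(T_H-T_K;W)\le\Mphi(T_H-T_{K'};W).
\]
By \eqref{old-eq:mass-splitting}, the left side equals $\Phi(\nu_H)\HH^n(P\cap W)+P_{\Phi^-}(K;W)$. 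For the right side, subadditivity (\cref{old-lem:asymmetric-gauge}) together with \eqref{old-eq:reversed-current-mass} gives the upper bound
\[
 \Mphi(T_H-T_{K'};W)\le\Phi(\nu_H)\HH^n(P\cap W)+P_{\Phi^-}(K';W).
\]
The term $\Phi(\nu_H)\HH^n(P\cap W)$ is finite, so subtracting it yields $P_{\Phi^-}(K;W)\le P_{\Phi^-}(K';W)$. Hence $K$ is a global $\Phi^-$-perimeter minimizer.

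For the current statement, apply \cref{old-prop:set-to-current} with $\Phi^-$ in place of $\Phi$; this is allowed because $\Phi^-$ satisfies the same hypotheses. It shows that $T_K$ is absolutely $\mathbf M_{\Phi^-}$-minimizing. By $\mu^\Phi_{-T}=\mu^{\Phi^-}_T$, for every cycle $S$ supported in $W$,
\[
 \Mphi(-T_K;W)=\mathbf M_{\Phi^-}(T_K;W)\le\mathbf M_{\Phi^-}(T_K-S;W)=\Mphi(-T_K+S;W).
\]
Since $S\mapsto -S$ is a bijection on cycles, this is exactly absolute minimality of $-T_K$ for $\Mphi$. There is a second route: the splitting shows $\mu^\Phi_{T_F}$ and $\mu^\Phi_{-T_K}$ differ by a fixed measure on $P$, and $T_F+S=T_H-(T_K-S)$, so absolute minimality of $T_F$ transfers through subadditivity in the same way. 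I will use whichever is cleaner.

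The step I expect to need the most care is the inequality on the competitor side. When $K'$ leaves $H$, the measures $-D\chi_{K'}$ and $D\chi_H$ need not be mutually singular, and they may partially cancel on $P$. This is exactly where subadditivity, and not equality, is needed, and the inequality runs in the favourable direction. The remaining points are bookkeeping: $\supp S$ must be compact in $W$, and only finite localized masses may be subtracted.
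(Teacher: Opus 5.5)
Your argument is correct and is essentially the paper's. The paper builds the competitor $G=\{\chi_H-\chi_L\ge1\}=H\setminus L$ by monotone truncation and uses set minimality of $F$. You instead feed the cycle $S=T_K-T_{K'}$ into the absolute minimality of $T_F$ from \cref{old-prop:set-to-current}; its proof, with $g=\chi_K-\chi_{K'}$ and hence $v=\chi_F+g=\chi_H-\chi_{K'}$, produces exactly the same truncated competitor $H\setminus K'$.

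The remaining steps also match the paper: the subadditive upper bound, the cancellation of the finite plane term $\Phi(\nu_H)\HH^n(P\cap W)$, and the passage to $-T_K$ via \cref{old-prop:set-to-current} for $(K,\Phi^-)$ and the bijection $S\mapsto-S$. No shrinking of $W$ is needed, since $\supp S$ already lies in a compact subset of $W$.
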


\begin{proof}
We first prove ambient set minimality, allowing the competitor to cross the
wall.  Let $W\subset\subset\R^{n+1}$ be open and let $L$ be a set of locally finite
perimeter such that $K\triangle L\subset\subset W$; no inclusion $L\subset H$ is
assumed.  Define the integer-valued $BV$ function and its monotone truncation
by
\[
 v:=\chi_H-\chi_L,
 \qquad
 G:=\{v\ge1\}=H\setminus L.
\]
Then $G=F$ outside $W$, so $G$ is an admissible competitor for $F$.  Moreover,
\[
 -Dv=-D\chi_H+D\chi_L
     =\mathbf n(T_H)+\mathbf n(-T_L).
\]
The monotone truncation lemma and gauge subadditivity therefore give
\begin{align}
 P_\Phi(G;W)
 &=\Mphi(T_G;W)\notag\\
 &\le \mathcal F_\Phi(-Dv;W)\notag\\
 &\le P_\Phi(H;W)+P_{\Phi^-}(L;W).
 \label{old-eq:defect-competitor-upper}
\end{align}
On the other hand, the exact splitting
\eqref{old-eq:mass-splitting} and the minimality of $F$ yield
\begin{equation}\label{old-eq:defect-competitor-lower}
 P_\Phi(H;W)+P_{\Phi^-}(K;W)
 =P_\Phi(F;W)
 \le P_\Phi(G;W).
\end{equation}
Combining \eqref{old-eq:defect-competitor-upper} and
\eqref{old-eq:defect-competitor-lower} and cancelling the finite plane term
proves
\[
 P_{\Phi^-}(K;W)\le P_{\Phi^-}(L;W).
\]
Thus $K$ is a global $\Phi^-$-perimeter minimizer with respect to arbitrary
ambient competitors.  Applying \cref{old-prop:set-to-current} to the pair
$(K,\Phi^-)$ shows that $T_K$ is absolutely $\Phi^-$-mass minimizing.
Finally, the reversal identities in
\eqref{old-eq:reversed-current-mass}, together with the bijection
$S\mapsto-S$ on compactly supported integral cycles, show equivalently that
$-T_K$ is absolutely $\Phi$-mass minimizing.
\end{proof}

\begin{lemma}[Simultaneous blow-down]
\label{old-lem:common-splitting-limit}
Let $F\subset H$ and $K=H\setminus F$ satisfy
\cref{old-lem:wall-defect-splitting}, and suppose that $F$ and $K$ are global
minimizers for $\Phi$ and $\Phi^-$, respectively.  For $s>0$,
write $s^{-1}A:=\{x/s:x\in A\}$.  Given $s_j>0$ with $s_j\to\infty$, set
\begin{equation}\label{old-eq:simultaneous-blowdown}
 F_j:=s_j^{-1}F,
 \qquad K_j:=s_j^{-1}K.
\end{equation}
After passing to one common subsequence, there exist global minimizers
$G,L\subset H$ for $\Phi$ and $\Phi^-$, respectively, such that
\begin{align}
 \chi_{K_j}&\to\chi_L\quad\text{in }L^1_{\mathrm{loc}},&
 D\chi_{K_j}&\stackrel{*}{\rightharpoonup}D\chi_L,&
 |D\chi_{K_j}|&\stackrel{*}{\rightharpoonup}|D\chi_L|,
 \label{old-eq:K-strict-limit}\\
 \chi_{F_j}&\to\chi_G\quad\text{in }L^1_{\mathrm{loc}},&
 D\chi_{F_j}&\stackrel{*}{\rightharpoonup}D\chi_G,&
 |D\chi_{F_j}|&\stackrel{*}{\rightharpoonup}|D\chi_G|
 \label{old-eq:F-strict-limit}
\end{align}
and
\begin{equation}\label{old-eq:joint-limits}
 G=H\setminus L,
 \qquad
 |D\chi_G|=\HH^n\llcorner P+|D\chi_L|.
\end{equation}
\end{lemma}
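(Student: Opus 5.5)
The plan is to apply \cref{old-prop:DPM-compactness} twice, once for each phase, and then pass the exact relations of \cref{old-lem:wall-defect-splitting} to the limit.

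\textbf{Scale invariance and uniform bounds.} Global minimality for a one-homogeneous integrand is invariant under dilations, so each $F_j$ is a global $\Phi$-minimizer and each $K_j$ is a global $\Phi^-$-minimizer; note that $\Phi^-$ satisfies the same hypotheses. Since $s_j^{-1}H=H$ and traces rescale, the dilated pair $(F_j,K_j)$ again satisfies the hypotheses of \cref{old-lem:wall-defect-splitting}. For locally uniform perimeter bounds I will use the standard comparison with a ball. For $E$ a global $\Phi$-minimizer, compare $E$ with $E\cup B_r$ (or $E\setminus B_r$) for a.e.\ $r$, which gives
\[
P_\Phi(E;B_r)\le\lambda^{-1}\HH^n(\partial B_r).
\]
Combined with $\Phi\ge\lambda$, this yields $|D\chi_E|(B_r)\le C(n,\lambda)r^n$, and the bound is scale invariant. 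This applies to both $F_j$ and $K_j$.

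\textbf{Extraction and passage to the limit.} Applying \cref{old-prop:DPM-compactness} first to $(K_j)$ and then, along that subsequence, to $(F_j)$ produces a common subsequence and global minimizers $L$ (for $\Phi^-$) and $G$ (for $\Phi$) with the strict convergences \eqref{old-eq:K-strict-limit}--\eqref{old-eq:F-strict-limit}. The inclusions $F_j,K_j\subset H$ pass to $L^1_{\mathrm{loc}}$ limits, so $G,L\subset H$ up to null sets. Since $\chi_{F_j}+\chi_{K_j}=\chi_H$ for every $j$, the $L^1_{\mathrm{loc}}$ limit gives $\chi_G+\chi_L=\chi_H$, that is, $G=H\setminus L$. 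Next apply \eqref{old-eq:perimeter-splitting} to the rescaled pair:
\[
|D\chi_{F_j}|=\HH^n\llcorner P+|D\chi_{K_j}|.
\]
The left side converges weak-star to $|D\chi_G|$ and the right side to $\HH^n\llcorner P+|D\chi_L|$. These convergences come from the third convergence in \eqref{old-eq:DPM-convergence}, which holds for both sequences. Uniqueness of weak-star limits then gives the identity \eqref{old-eq:joint-limits}.

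\textbf{Main obstacle.} The delicate point is that a measure identity survives the limit only because the \emph{total variation} measures converge, not merely lower semicontinuously. If one had only BV compactness, each side would only satisfy a lower semicontinuity inequality, and the identity could fail: mass of $|D\chi_{K_j}|$ could accumulate on $P$ and cancel against the plane term. This is exactly why the strong convergence of perimeter measures from \cref{old-prop:DPM-compactness} must be used for both phases on the same subsequence. The only other check is that this proposition applies to $\Phi^-$, which holds because $\Phi^-$ satisfies the same regularity and ellipticity bounds as $\Phi$.
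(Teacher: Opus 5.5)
Your proposal is correct and follows essentially the same route as the paper's proof. You use dilation invariance of minimality and of the full-trace condition, apply \cref{old-prop:DPM-compactness} first to $(K_j)$ and then along that subsequence to $(F_j)$, pass $\chi_{F_j}=\chi_H-\chi_{K_j}$ to the $L^1_{\mathrm{loc}}$ limit, and pass the scale-by-scale splitting $|D\chi_{F_j}|=\HH^n\llcorner P+|D\chi_{K_j}|$ to the limit using strict convergence of the total variation measures. The only cosmetic difference is that you derive the locally uniform perimeter bounds directly by comparison with $E\cup B_r$, which is exactly the upper density estimate the paper cites.
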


\begin{proof}
Dilations preserve global minimality.  They also preserve the full-trace
identity and its coefficient exactly, because $H$ and $P$ are cones through
the origin.  The density estimates give locally uniform perimeter bounds for
both sequences.  Apply \cref{old-prop:DPM-compactness} first to $K_j$ and
then, along the resulting subsequence, to $F_j$.  Equivalently, these are the
strict measure convergences in
\cite[Theorem~2.9, (2.58)]{DePhilippisMaggi2015}; support persistence is
contained in (2.60)--(2.61) there.  A single diagonal subsequence therefore
gives all three convergences in both
\eqref{old-eq:K-strict-limit} and \eqref{old-eq:F-strict-limit}.  Since
$\chi_{F_j}=\chi_H-\chi_{K_j}$ at every scale, their $L^1_{\rm loc}$ limits
satisfy $G=H\setminus L$.

At every scale, \cref{old-lem:wall-defect-splitting} gives
$|D\chi_{F_j}|=\HH^n\llcorner P+|D\chi_{K_j}|$.  Therefore, for every
$\zeta\in C_c(\R^{n+1})$,
\begin{align*}
 \int\zeta\,d|D\chi_G|
 &=\lim_j\int\zeta\,d|D\chi_{F_j}|\\
 &=\int_P\zeta\,d\HH^n
   +\lim_j\int\zeta\,d|D\chi_{K_j}|\\
 &=\int\zeta\,d\bigl(\HH^n\llcorner P+|D\chi_L|\bigr).
\end{align*}
This proves the Radon-measure identity in \eqref{old-eq:joint-limits} without
subtracting infinite masses.
\end{proof}

\begin{lemma}[Trace incompatibility at the wall]
\label{old-lem:double-wall}
Let $L\subset H$, let $G:=H\setminus L$, and suppose that on a relatively
open set $\Gamma\subset P$,
\begin{equation}\label{old-eq:L-full-trace-local}
 \Tr_PL=1\qquad\HH^n\text{-a.e. on }\Gamma.
\end{equation}
Then
\begin{equation}\label{old-eq:wall-measure-restrictions}
 |D\chi_L|\llcorner\Gamma=\HH^n\llcorner\Gamma,
 \qquad
 |D\chi_G|\llcorner\Gamma=0.
\end{equation}
Consequently, the identity
\begin{equation}\label{old-eq:double-wall-identity}
 |D\chi_G|=\HH^n\llcorner P+|D\chi_L|
\end{equation}
cannot hold when $\HH^n(\Gamma)>0$.
\end{lemma}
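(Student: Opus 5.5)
The plan is to use the zero-extension trace formula \eqref{old-eq:zero-extension-trace} from the proof of \cref{old-lem:wall-defect-splitting}, now applied locally on $\Gamma$. Since $L\subset H$, extending $\chi_L$ by zero gives
\[
 D\chi_L\llcorner P=-(\Tr_PL)\,\nu_H\,\HH^n\llcorner P .
\]
Restricting to $\Gamma$ and using \eqref{old-eq:L-full-trace-local}, we get $D\chi_L\llcorner\Gamma=-\nu_H\HH^n\llcorner\Gamma$. Taking total variations gives the first identity in \eqref{old-eq:wall-measure-restrictions}. Here restriction of a vector measure to the Borel set $\Gamma$ commutes with taking the variation measure.

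For $G$, I would use $\chi_G=\chi_H-\chi_L$, valid since $L\subset H$. The trace is linear and $\Tr_PH=1$, so $\Tr_PG=1-\Tr_PL=0$ $\HH^n$-a.e. on $\Gamma$. Applying the same zero-extension formula to $G\subset H$ gives $D\chi_G\llcorner\Gamma=0$, hence $|D\chi_G|\llcorner\Gamma=0$. Equivalently, one can compute directly
\[
 D\chi_G\llcorner\Gamma=D\chi_H\llcorner\Gamma-D\chi_L\llcorner\Gamma=-\nu_H\HH^n\llcorner\Gamma+\nu_H\HH^n\llcorner\Gamma=0 .
\]
This direct computation avoids invoking linearity of the trace separately.

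For the consequence, I would restrict both sides of \eqref{old-eq:double-wall-identity} to $\Gamma\subset P$. The left side vanishes by the second identity. The right side equals $\HH^n\llcorner\Gamma+\HH^n\llcorner\Gamma=2\HH^n\llcorner\Gamma$ by the first identity. Thus $0=2\HH^n(\Gamma)$, which contradicts $\HH^n(\Gamma)>0$.

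The only delicate point is justifying the local trace formula when $L$ need not have globally finite perimeter. Only locally finite perimeter is available, so I would check that the identity \eqref{old-eq:zero-extension-trace} is local. It holds on $P\cap B_R$ for each $R$ by the Gauss–Green trace theorem for $BV$ functions on the Lipschitz domain $H\cap B_{2R}$, and one then exhausts. Everything else is measure bookkeeping.
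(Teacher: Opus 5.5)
Your proposal is correct and follows the paper's proof: you apply the zero-extension trace formula \eqref{old-eq:zero-extension-trace} on $\Gamma$ to $L$ (full trace) and to $G$ (where $\Tr_PG=1-\Tr_PL=0$), and then restrict \eqref{old-eq:double-wall-identity} to $\Gamma$ to obtain $0=2\HH^n\llcorner\Gamma$. Your direct check $D\chi_G\llcorner\Gamma=D\chi_H\llcorner\Gamma-D\chi_L\llcorner\Gamma=0$ and your remark that the trace formula holds locally for sets of locally finite perimeter are harmless additions.
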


\begin{proof}
For any finite-perimeter set $E_0\subset H$, the zero-extension formula
\eqref{old-eq:zero-extension-trace} implies the scalar measure identity
\[
 |D\chi_{E_0}|\llcorner\Gamma
 =(\Tr_PE_0)\HH^n\llcorner\Gamma.
\]
Taking $E_0=L$ and using \eqref{old-eq:L-full-trace-local} gives the first
identity in \eqref{old-eq:wall-measure-restrictions}.  Since
$\Tr_PG=1-\Tr_PL=0$, taking $E_0=G$ gives the second.  Restricting
\eqref{old-eq:double-wall-identity} to $\Gamma$ would yield
$0=2\HH^n\llcorner\Gamma$, which is impossible.
\end{proof}

\begin{proof}[Proof of \cref{old-prop:wall-rigidity}]
By \cref{old-prop:wall-trace}, applied in every ball centered at the origin,
$\Tr_PF=1$ almost everywhere on $P$.  Set $K:=H\setminus F$.  By
\cref{old-lem:wall-defect-splitting,old-prop:defect-minimality}, $K$ has zero
trace on $P$ and is a global $\Phi^-$-perimeter minimizer.

Suppose that $\mathcal L^{n+1}(K)>0$.  Then $\supp|D\chi_K|$ is nonempty:
otherwise $\chi_K$ would be constant almost everywhere, while $K\subset H$
rules out the constant one and positive measure rules out the constant zero.
Moreover,
\begin{equation}\label{old-eq:defect-away-from-wall}
 \supp|D\chi_K|\cap P=\varnothing.
\end{equation}
Indeed, a wall contact, after tangential translation, would allow
\cref{old-prop:wall-trace} to be applied in a small ball and would give full
trace of $K$ on an open subset of $P$, contradicting $\Tr_PK=0$.

Choose $q\in\supp|D\chi_K|\cap H$ and let $s_j\to\infty$.  Apply
\cref{old-lem:common-splitting-limit} to $F_j=s_j^{-1}F$ and
$K_j=s_j^{-1}K$.  Since $q/s_j\in\supp|D\chi_{K_j}|$ and $q/s_j\to0$,
support persistence gives
\begin{equation}\label{old-eq:L-touches-wall}
 0\in\supp|D\chi_L|.
\end{equation}
The wall-trace maximum principle for the global $\Phi^-$-minimizer $L$ gives
\begin{equation}\label{old-eq:L-full-trace}
 \Tr_PL=1\qquad\HH^n\text{-a.e. on }P\cap B_1.
\end{equation}

The simultaneous dilation and the resulting multiplicity obstruction are
depicted in \cref{old-fig:double-wall}.  In the last panel the solid and
dashed lines represent two measures supported on the same wall, not two
separated parallel walls.

\begin{figure}[htbp]
  \centering
  \includegraphics[width=.98\textwidth]{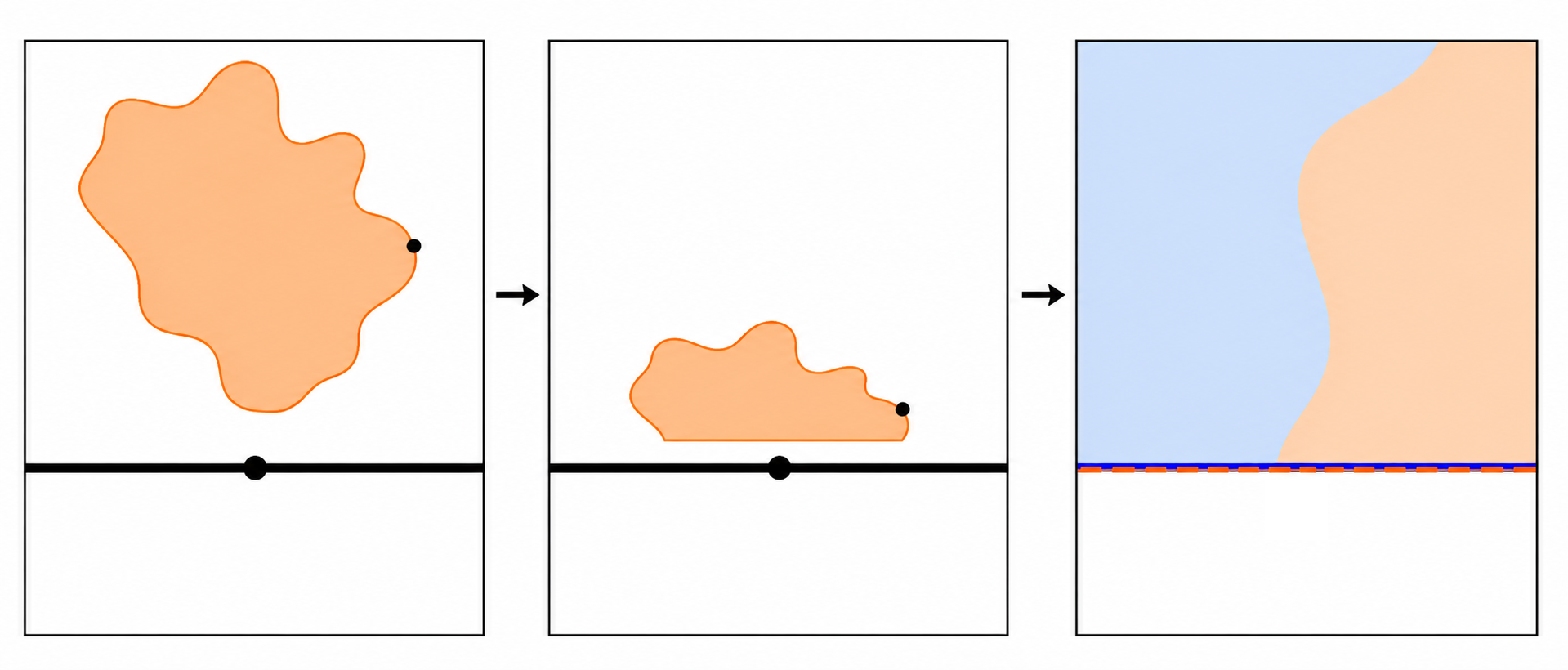}
  \caption{Second blow-down and the double-wall contradiction.}
  \label{old-fig:double-wall}
\end{figure}
\FloatBarrier

The measure identity in \eqref{old-eq:joint-limits} is precisely
\eqref{old-eq:double-wall-identity}.  Taking $\Gamma=P\cap B_1$ in
\cref{old-lem:double-wall} contradicts \eqref{old-eq:L-full-trace}.  Hence
$\mathcal L^{n+1}(K)=0$ and $F=H$ almost everywhere.
\end{proof}

\section{Proof of the minimizing halfspace theorem}
\label{old-sec:global-proof}

In this section, we complete the proof of the minimizing halfspace theorem by
taking a blow-down at infinity.  One-sided confinement places every limit in
the supporting halfspace, and support persistence forces
contact with its boundary; wall rigidity therefore identifies the limit with
the halfspace itself.  Strict flat convergence and thin-slab regularity then
produce single graphical regions whose radii tend to infinity.  Every fixed
boundary point eventually lies in these regions, forcing a common normal and,
ultimately, a single affine hyperplane.

\begin{proof}[Proof of \cref{old-thm:minimizing-halfspace}]
As above, apply an orthogonal map $Q$ to the coordinates and replace the
integrand by $\Phi_Q(\xi):=\Phi(Q^{-1}\xi)$.  Relabel the transformed set and
integrand as $E$ and $\Phi$, and then translate.  Set
\begin{equation}\label{old-eq:final-halfspace-notation}
 H:=\{x_{n+1}>0\},\qquad P:=\partial H,\qquad \nu_H:=-e_{n+1}.
\end{equation}
Suppose that the supporting halfspace is $\overline H$.  Since
$D\chi_E=0$ in the connected open halfspace
$\{x_{n+1}<0\}$, $\chi_E$ is constant there.  If necessary, replace $E$ by
$E^c$ and $\Phi$ by $\Phi^-$ so that $E\subset\overline H$ and hence
$E\subset H$ almost everywhere.  Indeed, $\nu_{E^c}=-\nu_E$ and therefore
\begin{equation}\label{old-eq:complement-reversal}
 P_{\Phi^-}(E^c;W)=P_\Phi(E;W)
\end{equation}
for every relatively compact open set $W$.  Thus this replacement preserves
global minimality, as well as the support in
\eqref{old-eq:one-sided-support}.

Choose $p\in\supp|D\chi_E|$ and translate $p$ to the origin.  For some
$a\ge0$ we then have
\begin{equation}\label{old-eq:translated-supporting-halfspace}
  E\subset\{x_{n+1}\ge-a\}.
\end{equation}
Let $R_j>0$ satisfy $R_j\to\infty$, and set
\begin{equation}\label{old-eq:first-blowdown}
  E_j:=R_j^{-1}E.
\end{equation}
Then $E_j\subset\{x_{n+1}\ge-a/R_j\}$.  The translated supporting plane has
distance $a/R_j$ from the origin.
By \cref{old-prop:DPM-compactness}, a subsequence converges to a global
minimizer $F\subset H$; support persistence gives
$0\in\supp|D\chi_F|$.  The distinction between wall contact and the later
single-sheet conclusion is illustrated in \cref{old-fig:first-blowdown-wall}.
The last panel records the remaining logical possibility at this stage: full
wall trace does not by itself exclude an interior defect
$K=H\setminus F$, nor does it imply global graphicality.

\begin{figure}[htbp]
  \centering
  \includegraphics[width=.98\textwidth]{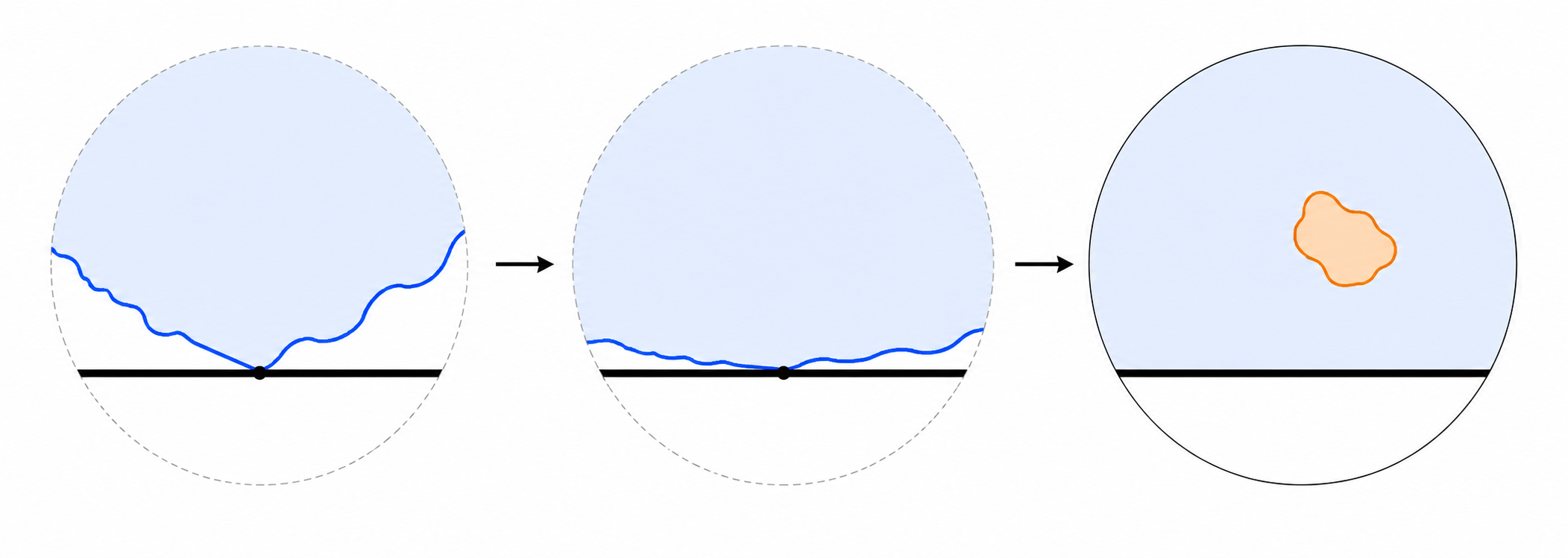}
  \caption{First blow-down and wall contact.}
  \label{old-fig:first-blowdown-wall}
\end{figure}
\FloatBarrier

Proposition
\ref{old-prop:wall-rigidity} gives
\begin{equation}\label{old-eq:flat-blowdown}
  F=H.
\end{equation}

By \cref{old-prop:set-to-current}, the oriented boundary currents of the
sets $E_j$ are absolutely $\Phi$-mass minimizing.  Hence the three
convergences in \eqref{old-eq:DPM-convergence} are exactly the hypotheses of
\cref{old-prop:strict-flat-graphical}.  Scaling its conclusion back to the
original variables, there is a fixed
$\theta_*=\theta_*(n,\lambda)>0$ such that, for all sufficiently large $j$,
\begin{equation}\label{old-eq:large-single-graph}
  \supp|D\chi_E|\cap\mathcal C_{\theta_*R_j}(\nu_H)
\end{equation}
is one connected $C^2$ graph over $P\cap B_{\theta_*R_j}$.  Its outer normal
converges uniformly to the outer normal $\nu_H$ of the limiting halfspace.  In
particular, if $0<\gamma<\theta_*/2$, its restriction to $B_{\gamma R_j}$ is a single
graph as well.  In
\cref{old-fig:expanding-graphs}, the first panel superimposes several members
of this rescaled sequence only to display convergence; they are not multiple
sheets of one boundary.  The middle panel represents the single graph at a
fixed rescaled scale, and the last panel represents the same original
boundary in expanding graphical cylinders.

\begin{figure}[htbp]
  \centering
  \includegraphics[width=.88\textwidth]{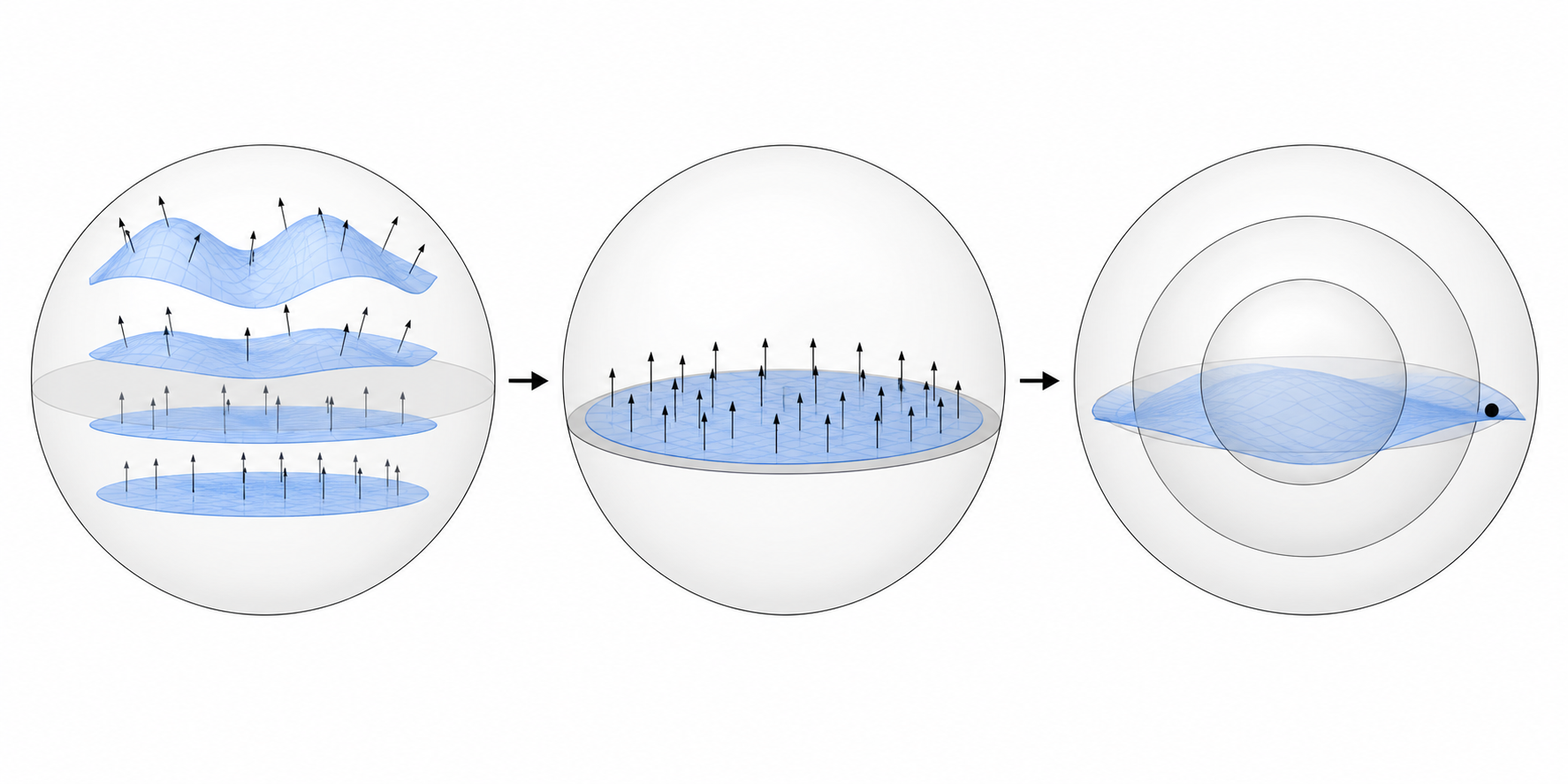}
  \caption{Flat blow-down and expanding graphical regions.}
  \label{old-fig:expanding-graphs}
\end{figure}
\FloatBarrier

Fix any $q\in\supp|D\chi_E|$.  For all sufficiently large $j$, the expanding
graph \eqref{old-eq:large-single-graph} contains $q$.  Thus $q$ is regular.
Dilation does not change its normal, while the graph normals converge
uniformly to $\nu_H$; hence the fixed vector $\nu_E(q)$ equals $\nu_H$.
Since $q$ was arbitrary, every point of the support is regular and has the
same normal.

Fix $q$ once more and choose $j$ so large that one connected graph in
\eqref{old-eq:large-single-graph} contains both the origin and $q$.  Its
graphing function has zero gradient everywhere, so the two points have the
same height.  Hence the support is contained in $P$.  Conversely, for every
$y\in P$, an expanding graph in \eqref{old-eq:large-single-graph} eventually
covers $y$ and supplies a point of the support above it; the preceding
conclusion forces that point to be $y$.  Therefore the support is exactly
$P$.  The
characteristic function is constant on the two components of the complement
of this hyperplane.  The lower component contains
$\{x_{n+1}<-a\}\subset E^c$, so its constant is zero.  Since the boundary is
nonempty, the constant on the upper component is one.  Thus $E$ agrees almost
everywhere with the corresponding halfspace.  Undoing the rigid motion,
translation, and possible complement completes the proof.
\end{proof}


\begingroup
\footnotesize
\begin{thebibliography}{WWXZ26}

\bibitem[AFP00]{AmbrosioFuscoPallara2000}
Ambrosio, L.; Fusco, N.; Pallara, D.
\emph{Functions of bounded variation and free discontinuity problems}.
Oxford Math. Monogr., The Clarendon Press, Oxford Univ. Press,
New York, 2000, xviii+434 pp.

\bibitem[All74]{Allard1974}
Allard, W.~K.
A characterization of the area integrand.
In: \emph{Symposia Mathematica, Vol.~XIV}, Academic Press,
London, 1974, 429--444.

\bibitem[Ber10]{Bergner2010}
Bergner, M.
A halfspace theorem for proper, negatively curved immersions.
\emph{Ann. Global Anal. Geom.} \textbf{38} (2010), 191--199.

\bibitem[BDGG69]{BombieriDeGiorgiGiusti1969}
Bombieri, E.; De Giorgi, E.; Giusti, E.
Minimal cones and the Bernstein problem.
\emph{Invent. Math.} \textbf{7} (1969), 243--268.

\bibitem[BG72]{BombieriGiusti1972}
Bombieri, E.; Giusti, E.
Harnack's inequality for elliptic differential equations on minimal surfaces.
\emph{Invent. Math.} \textbf{15} (1972), 24--46.

\bibitem[DDRG18]{DePhilippisDeRosaGhiraldin2018}
De Philippis, G.; De Rosa, A.; Ghiraldin, F.
Rectifiability of varifolds with locally bounded first variation with respect
to anisotropic surface energies.
\emph{Comm. Pure Appl. Math.} \textbf{71} (2018), 1123--1148.

\bibitem[DPM15]{DePhilippisMaggi2015}
De Philippis, G.; Maggi, F.
Regularity of free boundaries in anisotropic capillarity problems and the
validity of Young's law.
\emph{Arch. Ration. Mech. Anal.} \textbf{216} (2015), 473--568.

\bibitem[DRT22]{DeRosaTione2022}
De Rosa, A.; Tione, R.
Regularity for graphs with bounded anisotropic mean curvature.
\emph{Invent. Math.} \textbf{230} (2022), 463--507.

\bibitem[DMYZ25]{DuMooneyYangZhu2025}
Du, W.; Mooney, C.; Yang, Y.; Zhu, J.
A half-space Bernstein theorem for anisotropic minimal graphs.
\emph{J. Eur. Math. Soc. (JEMS)} (2025), published online first,
DOI: 10.4171/JEMS/1695.

\bibitem[DY24]{DuYang2024}
Du, W.; Yang, Y.
Flatness of anisotropic minimal graphs in $\R^{n+1}$.
\emph{Math. Ann.} \textbf{390} (2024), 4931--4949.

\bibitem[EW22]{EdelenWang2022}
Edelen, N.; Wang, Z.
A Bernstein-type theorem for minimal graphs over convex domains.
\emph{Ann. Inst. H. Poincar\'e C Anal. Non Lin\'eaire} \textbf{39} (2022),
749--760.

\bibitem[Fed69]{Federer1969}
Federer, H.
\emph{Geometric measure theory}.
Grundlehren Math. Wiss., vol.~153, Springer-Verlag,
New York, 1969, xiv+676 pp.

\bibitem[FM11]{FigalliMaggi2011}
Figalli, A.; Maggi, F.
On the shape of liquid drops and crystals in the small mass regime.
\emph{Arch. Ration. Mech. Anal.} \textbf{201} (2011), 143--207.

\bibitem[FMP10]{FigalliMaggiPratelli2010}
Figalli, A.; Maggi, F.; Pratelli, A.
A mass transportation approach to quantitative isoperimetric inequalities.
\emph{Invent. Math.} \textbf{182} (2010), 167--211.

\bibitem[HM90]{HoffmanMeeks1990}
Hoffman, D.; Meeks, W.~H., III.
The strong halfspace theorem for minimal surfaces.
\emph{Invent. Math.} \textbf{101} (1990), 373--377.

\bibitem[Jen61]{Jenkins1961}
Jenkins, H.~B.
On two-dimensional variational problems in parametric form.
\emph{Arch. Ration. Mech. Anal.} \textbf{8} (1961), 181--206.

\bibitem[KP05]{KoisoPalmer2005}
Koiso, M.; Palmer, B.
Geometry and stability of surfaces with constant anisotropic mean curvature.
\emph{Indiana Univ. Math. J.} \textbf{54} (2005), no.~6, 1817--1852.

\bibitem[KP08]{KoisoPalmer2008}
Koiso, M.; Palmer, B.
Rolling construction for anisotropic Delaunay surfaces.
\emph{Pacific J. Math.} \textbf{234} (2008), no.~2, 345--378.

\bibitem[KP11]{KuhnsPalmer2011}
Kuhns, C.; Palmer, B.
Helicoidal surfaces with constant anisotropic mean curvature.
\emph{J. Math. Phys.} \textbf{52} (2011), no.~7,
073506, 14 pp.

\bibitem[Mag12]{Maggi2012}
Maggi, F.
\emph{Sets of finite perimeter and geometric variational problems:
An introduction to geometric measure theory}.
Cambridge Stud. Adv. Math., vol.~135,
Cambridge Univ. Press, Cambridge, 2012.

\bibitem[Moo22]{Mooney2022}
Mooney, C.
Entire solutions to equations of minimal surface type in six dimensions.
\emph{J. Eur. Math. Soc. (JEMS)} \textbf{24} (2022), 4353--4361.

\bibitem[MY21]{MooneyYang2021}
Mooney, C.; Yang, Y.
A proof by foliation that Lawson's cones are $A_\Phi$-minimizing.
\emph{Discrete Contin. Dyn. Syst.} \textbf{41} (2021), 5291--5302.

\bibitem[MY24]{MooneyYang2024}
Mooney, C.; Yang, Y.
The anisotropic Bernstein problem.
\emph{Invent. Math.} \textbf{235} (2024), 211--232.

\bibitem[Sav10]{Savin2010}
Savin, O.
Minimal surfaces and minimizers of the Ginzburg--Landau energy.
In: Farina, A.; Valdinoci, E. (eds.),
\emph{Symmetry for elliptic PDEs},
Contemp. Math., vol.~528, Amer. Math. Soc.,
Providence, RI, 2010, pp.~43--57.

\bibitem[SSA77]{SchoenSimonAlmgren1977}
Schoen, R.; Simon, L.; Almgren, F.~J., Jr.
Regularity and singularity estimates on hypersurfaces minimizing parametric
elliptic variational integrals.
\emph{Acta Math.} \textbf{139} (1977), no.~3--4, 217--265.

\bibitem[Sim77]{Simon1977}
Simon, L.
On some extensions of Bernstein's theorem.
\emph{Math. Z.} \textbf{154} (1977), 265--273.

\bibitem[Sim87]{Simon1987}
Simon, L.
A strict maximum principle for area minimizing hypersurfaces.
\emph{J. Differential Geom.} \textbf{26} (1987), no.~2, 327--335.

\bibitem[Sim68]{Simons1968}
Simons, J.
Minimal varieties in Riemannian manifolds.
\emph{Ann. of Math. (2)} \textbf{88} (1968), 62--105.

\bibitem[SW89]{SolomonWhite1989}
Solomon, B.; White, B.
A strong maximum principle for varifolds that are stationary with respect
to even parametric elliptic functionals.
\emph{Indiana Univ. Math. J.} \textbf{38} (1989), no.~3, 683--691.

\bibitem[WWXZ26]{WangWeiXiaZhang2026}
Wang, G.; Wei, W.; Xia, C.; Zhang, X.
A half-space Liouville theorem for anisotropic minimal graph with free boundary.
\emph{Arch. Ration. Mech. Anal.} \textbf{250} (2026),
Paper No.~42, 34 pp.

\end{thebibliography}
\endgroup
\end{document}